\newtheorem{theorem}{Theorem}[section]
\newtheorem{exmp}[theorem]{Example}
\newtheorem{hypothesis}{Hypothesis}[section]
\newtheorem{lemma}[theorem]{Lemma}
\theoremstyle{definition}
\newtheorem{definition}[theorem]{Definition}
\newtheorem{remark}{Remark}
\def\@textbottom{\vskip \z@ \@plus 200pt}
\let\@texttop\relax
\title[SECOND-ORDER STOCHASTIC MAXIMUM PRINCIPLE ] 
      {A SECOND-ORDER STOCHASTIC MAXIMUM PRINCIPLE FOR GENERALIZED MEAN-FIELD SINGULAR CONTROL PROBLEM}
\author[Hancheng Guo and Jie Xiong]{}
\subjclass{Primary: 93E20, 93E03, 60H30; Secondary: 70G70.}
 \keywords{Stochastic maximum principle, mean-filed control problem, singular control,  Fr\'echet derivative, range theorem of vector-valued measures.}
 \email{guohancheng1989@gmail.com}
 \email{jiexiong@umac.mo}
\thanks{Research supported partially by FDCT 025/2016/A1.}
\thanks{$^*$ Corresponding author: Hancheng Guo}
\begin{document}
\maketitle

\centerline{\scshape Hancheng Guo$^*$}
\medskip
{\footnotesize
 \centerline {Department of Mathematics, Faculty of Science and Technology
}
   \centerline{University of Macau}
   \centerline{ Macau, 999078, P. R. China}
} 

\medskip

\centerline{\scshape Jie Xiong
}
\medskip
{\footnotesize
 \centerline{Department of Mathematics, Faculty of Science and Technology
}
   \centerline{University of Macau}
   \centerline{ Macau, 999078, P. R. China}
}

\bigskip

 \centerline{(Communicated by the associate editor name)}
\begin{spacing}{1.0}

\begin{abstract}
In this paper, we study the generalized mean-field stochastic control problem when the usual stochastic maximum principle (SMP) is not applicable due to the singularity of the Hamiltonian function. In this case, we derive a second order SMP. We introduce the adjoint process by the generalized mean-field backward stochastic differential equation. The keys in the proofs are the expansion of the cost functional in terms of a perturbation parameter, and the use of the range theorem for vector-valued measures.
\end{abstract}


\section{Introduction}

We consider the following optimal stochastic control problem of mean-field type with the state equation
\begin{equation}\label{eq210c}
\left\{
\begin{array}{l}
dX_t=b(t,X_t,P_ {{X_t}},v_t) dt
    +\sigma(t,X_t,P_ {{X_t}}) dB_t,\\
X_0=x,
\end{array}
\right.
\end{equation}
and the cost functional
\begin{equation}\label{eq211c}
\begin{split}                                                                                                                                                                                                                                                                                                                                                                                                                                                                                                                                                                                                                   
  \begin{aligned}
J(v)=\mathbb{E}\left\{ \int^T_0 h(t,X_t,P_ {{X_t}},v_t)dt+\Phi(X_T,P_ {{X_T}})\right\},
\end{aligned}
\end{split}
 \end{equation}
 where $P_\xi$ denotes the law of the random variable $\xi$.

The agent wishes to minimize his cost functional, namely, an admissible control $u\in \mathcal{U}$ is said to be optimal if
$$J(u)=\min_{v\in \mathcal{U}}J(v).$$
where $\mathcal{U}$ is the set of all admissible controls to be defined later in Section 3.

 About stochastic maximum principle (SMP), some pioneering works have been done by  Pontryagin et al. \cite{P1962}. They obtained Pontryagin's maximum principle by using ``spike variation''.  Kushner (\cite{K1965}, \cite{K1972}) studied the SMP in the framework when the diffusion coefficient does not depend on the control variable, and the cost functional consists of terminal cost only. Haussmann \cite{H1986} gave a version of SMP when the diffusion of the state does not depend on the control variable. Arkin and Saksonov \cite{AS1979}, Bensoussan  \cite{B1982} and Bismut \cite{B1978}, proved different versions of SMP under various setups. An SMP was obtained by Peng \cite{P1990} in 1990. In that paper, first and second order variational inequalities are introduced, when the control domain need not to be convex, and the diffusion coefficient contains the control variable.

 Pardoux and Peng \cite{PP1990} introduced non-linear backward stochastic differential equations (BSDE) in 1990. They showed that under appropriate assumptions, BSDE admits an unique adapted solution, and the associated comparison theorem holds. Buckdahn et al \cite{BLP2009} obtained mean-field BSDE in a natural way as the limit of some high dimensional system of forward and backward stochastic differential equations. Li \cite{L2012} studied SMP for mean-filed controls when the domain of the control is assumed to be convex. Under some additional assumptions, both necessary and sufficient conditions for the optimality of a control were proved. Buckdahn et al \cite{BLP2014} studied generalized mean-field stochastic differential equations and the associated partial differential equations (PDEs). ``Generalized" means that the coefficients depend on both the state process and its law. They proved that under appropriate regularity conditions on the coefficients, the SDE has a unique classical solution.  Buckdahn et al. \cite{BLM2016} obtained SMP for generalized mean-field system in 2016.

 Sometimes, the Hamiltonian function becomes constant in the control variable, as we will see in the next example, which makes the aforementioned SMP not applicable.
\begin{exmp}\label{em11}
Consider the control problem with state equation:
\begin{equation}\label{eq0210a}
\left\{
\begin{array}{ccl}
dX^v_t&=&v_tdt+\left\{(X^v_t-1)+\mathbb{E}\left[(X^v_t-1)\right]\right\}dB_t, \ v\in {U}:=\{-1,0,1\},\\
X^v_0&=&1,
  \end{array}
  \right.
\end{equation}
and cost functional:
$$J(v)=\frac{1}{2}\mathbb{E}\big\{(X^v_T-1)+\tilde{\mathbb{E}}\big[(X^v_T-1)\big]\big\}^2.$$
\indent For the control $u_t\equiv 0$, $X^u_t\equiv 1$ is the unique solution of (\ref{eq0210a}). It is clear that $J(u)=0$, and hence, $u$ is an optimal control. On the other hand, the first order adjoint processes satisfy the following equation:
\begin{equation}
\left\{
\begin{array}{ccl}
dp_t&=&\left\{q_t+\mathbb{E}\left[{q_t}\right] \right\}dt-q_tdB_t \\
p_T&=&0.
  \end{array}
  \right.
\end{equation}
 Clearly $(p_t,q_t)\equiv (0,0) $ is the solution. Therefore
$$H(t,X^u_t,P_{X^u_t},p_t,q_t,v)\equiv 0,\ \ \ v\in {U}.$$
which makes the SMP useless in charactering the optimal control $u_t=0.$
\qed
\end{exmp}

 Now, we discuss singular optimal stochastic controls defined as follows.
\begin{definition}
An admissible control $\tilde{u}(\cdot)$ is singular on region $V$ if $V\subset U$ is of positive measure and for $a.e.\ t\in [0,T]$ and $v\in V$, we have for any $v\in V$,
 \begin{eqnarray}
  H(t,X^{\tilde{u}}_t,P_{X^{\tilde{u}}_t},p_t^ {\tilde{u}},q_t^ {\tilde{u}},\tilde{u}_t)=H(t,X^{\tilde{u}}_t,P_{X^{\tilde{u}}_t},p_t^ {\tilde{u}},q_t^ {\tilde{u}},v), \quad a.s.
  \end{eqnarray}
\end{definition}

As we have seen in last example, the SMP is not very useful under singular control. Our goal is to derive further necessary condition for optimality. We shall call the original SMP as the first order SMP while the one we will derive as the second  order one.

   For second-order SMP of singular control problems, Bell \cite{BJ1975}, Gabasov \cite{GK1972}, Kazemi-Dehkordi \cite{KD1984}, Krener \cite{K1977}, Mizukami and Wu \cite{MW1992} devoted themselves to the deterministic case. Lu \cite{L2016} interested in second order necessary conditions for  stochastic evolution system. Tang \cite{T2010} studied the singular optimal control problem for stochastic system with state equation
\begin{equation}
\left\{
\begin{array}{l}
dX_t=b(t,X_t,v_t) dt
    +\sigma(t,X_t) dB_t,\\
X_0=x.
\end{array}
\right.
\end{equation}
and the cost functional
\begin{equation}
\begin{split}
  \begin{aligned}
J(v)=\mathbb{E}\{ \int^T_0 h(t,X_t,v_t)dt+\Phi(X_T)\},
\end{aligned}
\end{split}
 \end{equation}
By applying spike variation and vector-value measure theory, a second-order maximum principle is presented which involves the second-order adjoint process.

  In this paper, we study the case when the state equation and the cost functional are in generalized mean-field form. The rest of this paper is organized as follows: In Section 2, we introduce the preliminaries about the generalized mean-field BSDEs. In Section 3, we set up the formulation of the singular optimal stochastic control problem and state the main result of the paper. Section 4 is devoted to the study of the impact of the control actions on the state and the cost functional by using Taylor's expansion. In that section, we also present some estimations about the state. In Section 5, the method in Section 4 is reused for the expansion of the cost functional with respect to the control variable.  Sections 6 is devoted to the proof of the second order stochastic maximum principle.

\section{Preliminaries}

\indent In this section, for the convenience of the reader, we state some results of Buckdahn et al. \cite{BLP2014} without proofs.

 Let $\mathcal{P}_2(\mathbb{R}^n)$  be the collection of all square integrable probability measures over $(\mathbb{R}^n,\mathcal{B}(\mathbb{R}^n))$, endowed with the 2-Wasserstein metric $W_2$, which is defined as
$$W_2(P_{\mu },P_{\nu})=\inf\left\{ \left(\mathbb{E}[|\mu'-\nu'|^2]\right)^{\frac{1}{2}}\right \},$$
for all $\mu',\nu' \in L^2(\mathcal{F}_0;\mathbb{R}^d)$ with $P_{\mu'}=P_{\mu}, \ P_{\nu'}=P_{\nu}.$ Denote by $L^2(\mathcal{F};\mathbb{R}^n)$ the collection of all $\mathbb{R}^n$-valued square integrable random variables. The following definition is taken from Cardaliaguet \cite{C 2013}.
\begin{definition}
A function $f: \mathcal{P}_2(\mathbb{R}^n)\longrightarrow \mathbb{R} $ is said to be differentiable in $\mu \in \mathcal{P}_2(\mathbb{R}^n)$ if, the function $\tilde{f}: L^2(\mathcal{F};\mathbb{R}^n)\longrightarrow \mathbb{R}$ given by $\tilde{f}(\mathfrak{v})=f(P_{\mathfrak{v}})$ is differentiable (in Fr\'{e}chet sense) at $\mathfrak{v}_0$, defined by $P_{\mathfrak{v}_0}=\mu$, i.e. there exists a linear continuous mapping $D\tilde{f}(\mathfrak{v}_0):L^2( \mathcal{F} ;\mathbb{R}^n)\longrightarrow \mathbb{R},$ such that
$$\tilde{f}(\mathfrak{v}_0+\eta)-\tilde{f}(\mathfrak{v}_0)=D\tilde{f}(\mathfrak{v}_0)(\eta)+o(|\eta|_{L^2}),$$
with $|\eta|_{L^2}\longrightarrow 0$ for $\eta \in L^2( \mathcal{F} ;\mathbb{R}^n). $
\end{definition}

 According to the Riesz representation theorem, there exists a unique random variable $\theta_0\in L^2( \mathcal{F} ;\mathbb{R}^n)$ such that $D \tilde{f}(\mathfrak{v}_0)(\eta)=(\theta_0,\eta)_{L^2}=\mathbb{E}[\theta_0\eta]$, for all $\eta\in  L^2( \mathcal{F} ;\mathbb{R}^n).$ In \cite{C 2013} it has been proved that there is a Borel function $h_0:\mathbb{R}^n\longrightarrow\mathbb{R}^n$ such that $\theta_0=h_0(\mathfrak{v}_0)$ a.s. Then,
$$f(P_{\mathfrak{v}})-f(P_{\mathfrak{v}_0})
=\mathbb{E}\left[h_0(\mathfrak{v}_0)(\mathfrak{v}-\mathfrak{v}_0)\right]
+o(|\mathfrak{v}-\mathfrak{v}_0|_{L^2}),\qquad \mathfrak{v}\in L^2( \mathcal{F} ;\mathbb{R}^n). $$

 We call $\partial_{\mu} f( P_{\mathfrak{v}_0},y):=h_0(y), \ y\in \mathbb{R}^n $, the derivative of $f: \mathcal{P}_2(\mathbb{R}^n)\longrightarrow \mathbb{R}^n$ at $P_{\mathfrak{v}_0}.$ Note that  $\partial_{\mu} f( P_{\mathfrak{v}_0},y)$ is $P_{\mathfrak{v}_0}(dy) $-$a.s.$ uniquely determined. \\
\indent For mean-field type SDE and BSDE, we introduce the following notations. Let $(\Omega',{\mathcal{F}}',P')$ be a copy of the probability space $(\Omega,{\mathcal{F}},P)$. For each random variable $\xi$ over $(\Omega,{\mathcal{F}},P)$ we denote by $\xi'$ a copy of $\xi$ defined over $(\Omega',{\mathcal{F}}',P')$. $\mathbb{E}'[\cdot]=\int_{\Omega'}(\cdot)dP'$ acts only over the variables  $\omega'$.

\begin{definition}
We say that $f\in C^{1,1}_b(\mathcal{P}_2(\mathbb{R}^d))$  (continuously differentiable over $\mathcal{P}_2(\mathbb{R}^d)$ with Lipschitz-continuous bounded derivative), if for all $\mathfrak{v}\in L^2(\mathcal{F},\mathbb{R}^d)$, there exists a $P_{\mathfrak{v}}$-modification of $\partial_{\mu }f(P_{\mathfrak{v}},\cdot)$, again denote by $\partial_{\mu }f(P_{\mathfrak{v}},\cdot)$, such that $\partial_{\mu }f:\mathcal{P}_2(\mathbb{R}^d)\times \mathbb{R}^d\longrightarrow \mathbb{R}^d$ is bounded and Lipschitz continuous, i.e., there is a real constant $C$ such that
\begin{equation}
\begin{split}
  \begin{aligned}
i)\ &|\partial_{\mu }f(\mu ,x)|\leq C,\qquad\forall \mu \in\mathcal{P}_2( \mathbb{R}^d), x\in \mathbb{R}^d,&\\
ii)\ &|\partial_{\mu }f(\mu ,x)-\partial_{\mu }f(\mu' ,x')|\leq C\left(W_2(\mu,\mu')+|x-x'|\right),&\\
&\forall \mu, \mu' \in\mathcal{P}_2( \mathbb{R}^d), x,x'\in \mathbb{R}^d;&
 \end{aligned}
  \end{split}
\end{equation}
we call this function $\partial_{\mu }f$ the derivative of $f$.
\end{definition}

\indent Given $f\in C^{1,1}_b(\mathcal{P}_2(\mathbb{R}^d))$, and $y\in \mathbb{R}^d$, the question of the differentiability of its components $(\partial_{\mu }f)_j(\cdot , y): \mathcal{P}_2(\mathbb{R}^d)\rightarrow \mathbb{R}, \ 1\leq j \leq d,$ raises. This can be discussed in the same way as the first order derivative $\partial_{\mu}f$ above. If $(\partial_{\mu }f)_j(\cdot , y): \mathcal{P}_2(\mathbb{R}^d)\rightarrow \mathbb{R}$ belongs to $C^{1,1}_b(\mathcal{P}_2(\mathbb{R}^d))$, we have that its derivative $\partial_{\mu}((\partial_{\mu }f)_j(\cdot , y))(\cdot , \cdot): \mathcal{P}_2(\mathbb{R}^d)\times \mathbb{R}^d\rightarrow \mathbb{R}^d$ is a Lipschitz-continuous function. Then
$$\partial^2_{\mu}f(\mu, x,y):=\left(\partial_{\mu}\left(\left(\partial_{\mu }f)_j\left(\cdot , y\right)\right)(\mu , x\right)\right)_{1\leq j \leq d},\ \ (\mu ,x,y)\in \mathcal{P}_2(\mathbb{R}^d)\times \mathbb{R}^d\times \mathbb{R}^d,$$
defines a function $\partial^2_{\mu}f:\mathcal{P}_2(\mathbb{R}^d)\times \mathbb{R}^d\times \mathbb{R}^d\rightarrow \mathbb{R}^d\otimes\mathbb{R}^d.$

\begin{definition}
We say that $f\in C^{2,1}_b(\mathcal{P}_2(\mathbb{R}^d))$, if $f\in C^{1,1}_b(\mathcal{P}_2(\mathbb{R}^d))$ and \\
i) $(\partial_{\mu }f)_j(\cdot , y)\in C^{1,1}_b(\mathcal{P}_2(\mathbb{R}^d))$, for all $y\in \mathbb{R}^d, \ 1\leq j \leq d$, and $\partial^2_{\mu}f: \mathcal{P}_2(\mathbb{R}^d)\times \mathbb{R}^d\times \mathbb{R}^d\rightarrow \mathbb{R}^d\otimes\mathbb{R}^d$ is bounded and Lipschitz-continuous;\\
ii)  $(\partial_{\mu }f)(\mu , \cdot): \mathbb{R}^d\rightarrow \mathbb{R}^d$ is differentiable for every $\mu \in \mathcal{P}_2(\mathbb{R}^d),$ and its derivative $\partial_y \partial_{\mu}f:\mathcal{P}_2(\mathbb{R}^d)\times \mathbb{R}^d\rightarrow \mathbb{R}^d\otimes\mathbb{R}^d$ is bounded and Lipschitz-continuous.
\end{definition}

\begin{exmp}
For twice continuously differentiable functions $h:\mathbb{R}^d\rightarrow \mathbb{R}$ and $g:\mathbb{R}\rightarrow \mathbb{R}$ with bounded derivatives. Consider $f(P_{\mathfrak{v}}):=g(\mathbb{E}[h(\mathfrak{v})]),$ $\mathfrak{v}\in L^2(\mathcal{F};\mathbb{R}^d).$ Then, given any $\mathfrak{v}_0\in  L^2(\mathcal{F};\mathbb{R}^d),$ $\tilde{f}(\mathfrak{v}):=f(P_{\mathfrak{v}})=g(\mathbb{E}[h(\mathfrak{v})])$ is Fr\'{e}chet differentiable in $\mathfrak{v}_0$, and
\begin{eqnarray*}
\tilde{f}(\mathfrak{v}_0+\eta)-\tilde{f}(\mathfrak{v}_0)&=&\int^1_0 g'(\mathbb{E}[h(\mathfrak{v}_0+s\eta)])\mathbb{E}[h'(\mathfrak{v}_0+s\eta)\eta]ds\\
&&=g'(\mathbb{E}[h(\mathfrak{v}_0)])\mathbb{E}[h'(\mathfrak{v}_0)\eta]+o(|\eta|_{L^2})\\
&&=\mathbb{E}[g'(\mathbb{E}[h(\mathfrak{v}_0)])h'(\mathfrak{v}_0)\eta]+o(|\eta|_{L^2}).
\end{eqnarray*}
So, $D\tilde{f}(\mathfrak{v}_0)(\eta)=\mathbb{E}[g'(\mathbb{E}[h(\mathfrak{v}_0)])h'(\mathfrak{v}_0)\eta],$ $\eta\in L^2(\mathcal{F};\mathbb{R}^d), i.e.,$
$$\partial_{\mu}f(P_{\mathfrak{v}_0},y)=g'(\mathbb{E}[h(\mathfrak{v}_0)])(\partial_y h)(y), y\in \mathbb{R}^d.$$
Similarly, we see that
\begin{eqnarray*}
\partial_{\mu}^2f(P_{\mathfrak{v}_0},x,y)=g''(\mathbb{E}[h(\mathfrak{v}_0)])(\partial_xh)(x)\times(\partial_yh)(y),
\end{eqnarray*}
and
\begin{eqnarray*}
\partial_{y}\partial_{\mu}f(P_{\mathfrak{v}_0},y)
=g'(\mathbb{E}[h(\mathfrak{v}_0)])(\partial_y^2h)(y).
\end{eqnarray*}
\end{exmp}

 Let us now consider a complete probability space $(\Omega,\mathcal{F},P)$ on which we define a $d$-dimensional Brownian motion $B=(B^1,\cdots,B^d)=(B_t)_{t\in [0,T]}$, where $T\geq 0$ denotes an arbitrarily fixed time horizon. We make the following assumptions: There is a sub-$\sigma$-field $\mathcal{F}_0\subset\mathcal{F}$ such that\\
\indent i) the Brownian motion $B$ is independent of $\mathcal{F}_0$, and\\
\indent ii) $\mathcal{F}_0$ is ``rich enough", i.e., $\mathcal{P}_2(\mathbb{R}^d)=\{ P_{\mathfrak{v}}, \mathfrak{v}\in L^2(\mathcal{F}_0; \mathbb{R}^d) \}.$\\
By $\mathbb{F}=(\mathcal{F}_t)_{t\in [0,T]}$ we denote the filtration generated by $B$, completed and augmented by $\mathcal{F}_0$.\\
\indent Given deterministic Lipschitz functions $\sigma: \mathbb{R}^d\times\mathcal{P}_2(\mathbb{R}^d)\longrightarrow \mathbb{R}^{d\times d}$ and  $b: \mathbb{R}^d\times\mathcal{P}_2(\mathbb{R}^d)\longrightarrow \mathbb{R}^{d},$ we consider for the initial state $(t,x)\in [0,T]\times \mathbb{R}^d$ and $\xi\in L^2(\mathcal{F}_t;\mathbb{R}^d)$ the stochastic differential equations (SDEs)
\begin{equation}
\begin{split}
  \begin{aligned}
X^{t,\xi}_s=\xi+\int^s_t \sigma(X^{t,\xi}_r,P_{X^{t,\xi}_r})dB_r+\int^s_t \sigma(X^{t,\xi}_r,P_{X^{t,\xi}_r})dr,\ s\in[t,T],
  \end{aligned}
  \end{split}
\end{equation}
and
\begin{equation}
\begin{split}
  \begin{aligned}
  X^{t,x,\xi}_s=x+\int^s_t \sigma(X^{t,x,\xi}_r,P_{X^{t,\xi}_r})dB_r+\int^s_t \sigma(X^{t,x,\xi}_r,P_{X^{t,\xi}_r})dr,\ s\in[t,T].
  \end{aligned}
  \end{split}
\end{equation}
It is well-known that under the assumptions above both SDEs have unique solutions in $\mathcal{S}^2([t,T];\mathbb{R}^d),$ which is the space of $\mathbb{F}$-adapted continuous processes $Y=(Y_s)_{s\in[t,T]}$ with $\mathbb{E}[\sup_{s\in [t,T]}|Y_s|^2]\leq \infty.$

\begin{hypothesis}
The couple of coefficients $(\sigma,b)$ belongs to $C^{1,1}_b(\mathbb{R}^d\times \mathcal{P}_2(\mathbb{R}^d)\longrightarrow \mathbb{R}^{d\times d}\times \mathbb{R}^d),$ i.e., the components $\sigma_{i,j},b_j,\ 1\leq i,j \leq d,$ satisfy the following conditions:\\
i) $\sigma_{i,j}(x,\cdot), b_{j}(x,\cdot)$ belong to $C^{1,1}_b( \mathcal{P}_2(\mathbb{R}^d))$, for all $x\in \mathbb{R}^d$\\
ii) $\sigma_{i,j}(\cdot,\mu), b_{j}(\cdot,\mu)$ belong to $C^{1}_b(\mathbb{R}^d)$, for all $\mu\in \mathcal{P}_2(\mathbb{R}^d)$\\
iii) The derivatives $\partial_x\sigma_{i,j}, \partial_xb_{j}:\mathbb{R}^d\times \mathcal{P}_2(\mathbb{R}^d)\longrightarrow \mathbb{R}^d $, $\partial_{\mu}\sigma_{i,j}, \partial_{\mu}b_{j}:\mathbb{R}^d\times \mathcal{P}_2(\mathbb{R}^d)\times \mathbb{R}^d\longrightarrow \mathbb{R}^d $,  are bounded and Lipschitz continuous.

\end{hypothesis}

\begin{hypothesis}\label{H0}
The couple of coefficient $(\sigma,b)$ belongs to $C^{2,1}_b(\mathbb{R}^d\times \mathcal{P}_2(\mathbb{R}^d)\longrightarrow \mathbb{R}^{d\times d}\times \mathbb{R}^d),$ i.e., $(\sigma,b)\in C^{1,1}_b(\mathbb{R}^d\times \mathcal{P}_2(\mathbb{R}^d)\longrightarrow \mathbb{R}^{d\times d}\times \mathbb{R}^d)$ and the components $\sigma_{i,j},b_j,\ 1\leq i,j \leq d,$ satisfies the following conditions:\\
i) $\partial_{x_k}\sigma_{i,j}(\cdot,\cdot), \partial_{x_k} b_{j}(\cdot,\cdot)$ belong to $C^{1,1}_b( \mathbb{R}^d \times\mathcal{P}_2(\mathbb{R}^d))$, for all $1\leq k \leq d;$\\
ii) $\partial_{\mu }\sigma_{i,j}(\cdot,\cdot,\cdot), \partial_{\mu }b_{j}(\cdot, \cdot,\cdot)$ belong to $C^{1,1}_b(\mathbb{R}^d \times \mathcal{P}_2(\mathbb{R}^d)\times \mathbb{R}^d)$, for all $\mu\in \mathcal{P}_2(\mathbb{R}^d)$\\
iii) All the derivatives of $\sigma_{i,j}, b_{j}$, up to order 2 are bounded and Lipschitz continuous.
\end{hypothesis}

\indent The following theorem is taken from \cite{BLP2014}. It gives the It\^{o} formula related to a probability measure.

\begin{theorem}
Let $\Phi \in C^{2,1}_b(\mathbb{R}^d\times \mathcal{P}_2(\mathbb{R}^d)).$ Then, under Hypothesis \ref{H0}, for all $0\leq t \leq s \leq T, x\in \mathbb{R}^d, \xi\in L^2(\mathcal{F}_t;\mathbb{R}^d)$ the It\^o formula is satisfied as follow:
\begin{eqnarray}
&&\Phi(X^{t,x,P_{\xi}}_s, P_{X^{t,\xi}_s})-\Phi(x,P_{\xi})\nonumber\\
&=&\int^s_t\bigg(\sum^d_{i=1}\partial_{x_i}\Phi(X^{t,x,P_{\xi}}_r, P_{X^{t,\xi}_r})b_i(X^{t,x,P_{\xi}}_r, P_{X^{t,\xi}_r})\nonumber\\
&&+\frac{1}{2} \sum^d_{i,j,k=1} \partial^2_{x_i,x_j}\Phi(X^{t,x,P_{\xi}}_r, P_{X^{t,\xi}_r})(\sigma_{i,k}\sigma_{j,k})(X^{t,x,P_{\xi}}_r, P_{X^{t,\xi}_r})\nonumber\\
&&+\mathbb{E}'\big[\sum^d_{i=1}(\partial_{\mu }\Phi)_i(X^{t,x,P_{\xi}}_r, P_{X^{t,\xi}_r},(X^{t,{\xi}}_r)')b_i((X^{t,{\xi}}_r)', P_{X^{t,\xi}_r})\nonumber\\
&&+\frac{1}{2} \sum^d_{i,j,k=1} \partial_{y_i}((\partial_{\mu }\Phi)_j(X^{t,x,P_{\xi}}_r, P_{X^{t,\xi}_r},(X^{t,{\xi}}_r)')(\sigma_{i,k}\sigma_{j,k})((X^{t,{\xi}}_r)', P_{X^{t,\xi}_r})\big]\bigg)dr\nonumber\\
&&+\int^s_t \sum^d_{i,j=1}\partial_{x_i}\Phi(X^{t,x,P_{\xi}}_r, P_{X^{t,\xi}_r})\sigma_{i,j}(X^{t,x,P_{\xi}}_r, P_{X^{t,\xi}_r})dB^j_r, \ s\in [t,T].
\end{eqnarray}

\end{theorem}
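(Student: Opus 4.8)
The plan is to establish the It\^o formula for $\Phi\in C^{2,1}_b$ by first proving it for a dense class of test functions and then passing to the limit, mimicking the strategy used by Buckdahn et al.\ \cite{BLP2014}. Since the joint process $\big(X^{t,x,P_\xi}_s, X^{t,\xi}_s\big)$ solves a (coupled) SDE whose second component's law drives both components, the natural first step is to reduce the measure-dependence to a finite-dimensional one: approximate $P_{X^{t,\xi}_r}$ by the empirical measure $\frac1N\sum_{k=1}^N\delta_{(X^{t,\xi}_r)^{(k)}}$ of $N$ i.i.d.\ copies $(X^{t,\xi}_r)^{(k)}$ of $X^{t,\xi}_r$. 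The key linking fact, proved in \cite{BLP2014}, is that for $f\in C^{1,1}_b(\mathcal{P}_2(\mathbb{R}^d))$ the map $u^N(x_1,\dots,x_N):=f\big(\frac1N\sum\delta_{x_k}\big)$ on $(\mathbb{R}^d)^N$ satisfies $\partial_{x_k}u^N(x_1,\dots,x_N)=\frac1N\,\partial_\mu f\big(\frac1N\sum\delta_{x_j},x_k\big)$, and analogously for second derivatives in terms of $\partial^2_\mu f$ and $\partial_y\partial_\mu f$. This converts the problem into the classical finite-dimensional It\^o formula applied to $\Phi\big(X^{t,x,P_\xi}_s,\frac1N\sum\delta_{(X^{t,\xi}_s)^{(k)}}\big)$, a standard $C^2$ function of the $Nd+d$ variables $\big(X^{t,x,P_\xi}_s,(X^{t,\xi}_s)^{(1)},\dots,(X^{t,\xi}_s)^{(N)}\big)$.

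Concretely, I would proceed as follows. First, fix $N$ i.i.d.\ copies $X^{(k)}$ of the solution $X^{t,\xi}$, driven by independent Brownian motions $B^{(k)}$, all independent of $B$; set $\mu^N_r:=\frac1N\sum_{k=1}^N\delta_{X^{(k)}_r}$. Apply the classical It\^o formula to $t\mapsto\Phi\big(X^{t,x,P_\xi}_s,\mu^N_s\big)$ viewed as a $C^2$ function of the extended state, using the derivative identities above: the first-order terms produce $\partial_{x_i}\Phi\cdot b_i$ (from the $x$-variable) plus $\frac1N\sum_k\mathbb{E}$-free sums of $(\partial_\mu\Phi)_i(\cdot,\mu^N_r,X^{(k)}_r)b_i(X^{(k)}_r,P_{X^{t,\xi}_r})$, and the second-order terms produce the $\frac12\partial^2_{x_ix_j}\Phi\,(\sigma\sigma^{\!\top})_{ij}$ term, the $\frac1N\sum_k\frac12\partial_{y_i}(\partial_\mu\Phi)_j(\cdot,\mu^N_r,X^{(k)}_r)(\sigma_{i,k}\sigma_{j,k})(X^{(k)}_r,P_{X^{t,\xi}_r})$ term, plus cross terms $\partial^2_\mu$ and $\partial_{x}\partial_\mu$ that carry an extra $\frac1{N}$ or $\frac1{N^2}$ factor; here one uses that the driving noises $B,B^{(1)},\dots,B^{(N)}$ are mutually independent, so there is no $dB\,dB^{(k)}$ or $dB^{(k)}dB^{(j)}$ ($k\ne j$) covariation, which is exactly why only the ``diagonal'' $\partial_y\partial_\mu$ term survives and no $\partial^2_\mu$ term appears in the final formula. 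Second, take expectations where needed and let $N\to\infty$: by the law of large numbers and the boundedness/Lipschitz continuity of $b,\sigma,\partial_x\Phi,\partial^2_x\Phi,\partial_\mu\Phi,\partial_y\partial_\mu\Phi$ granted by Hypothesis~\ref{H0} and $\Phi\in C^{2,1}_b$, the empirical-measure sums $\frac1N\sum_k F(X^{(k)}_r)$ converge (in $L^2$, uniformly in $r$ after using $\mathcal{S}^2$ estimates and $W_2$-continuity) to $\mathbb{E}'\big[F((X^{t,\xi}_r)')\big]$, the terms with the spurious $\frac1N$ prefactors vanish, and $\mu^N_r\to P_{X^{t,\xi}_r}$ in $W_2$, so $\Phi(X^{t,x,P_\xi}_s,\mu^N_s)\to\Phi(X^{t,x,P_\xi}_s,P_{X^{t,\xi}_s})$ and likewise for every integrand. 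The stochastic integral $\int_t^s\partial_{x_i}\Phi\,\sigma_{i,j}\,dB^j_r$ converges by It\^o isometry. Passing to the limit term by term yields the claimed identity.

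I would fill in the supporting estimates along the way: the $\mathcal{S}^2$-bounds $\mathbb{E}[\sup_{s}|X^{t,x,\xi}_s|^2]<\infty$ and $\mathbb{E}[\sup_s|X^{t,\xi}_s|^2]<\infty$ from the Lipschitz well-posedness already recalled, the propagation-of-chaos rate $\sup_{r\le T}\mathbb{E}[W_2^2(\mu^N_r,P_{X^{t,\xi}_r})]\to0$, and uniform-in-$N$ moment bounds on $X^{(k)}$ so that dominated convergence applies to the $dr$-integrals. All of these are routine given the standing hypotheses.

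The main obstacle is the bookkeeping of the second-order differential in the empirical-measure variables: one must carefully identify which of the mixed partials $\partial_{x_k}\partial_{x_j}u^N$ scale like $1/N$ (and hence survive in the limit, contributing the $\partial_y\partial_\mu\Phi$ term when $k=j$) versus like $1/N^2$ (the $k\ne j$ terms, which would give $\partial^2_\mu\Phi$ but are killed both by the extra $1/N$ and, more fundamentally, by the independence of the $B^{(k)}$'s that prevents any $k\ne j$ covariation from entering in the first place), and to track the chain rule through the two-level derivative structure of $C^{2,1}_b$ — that $(\partial_\mu\Phi)_j(\cdot,y)$ is itself differentiated in $\mu$ to get $\partial^2_\mu\Phi$ and in $y$ to get $\partial_y\partial_\mu\Phi$. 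Once this combinatorial accounting is done correctly and matched against the derivative identities for $u^N$, the passage to the limit is standard.
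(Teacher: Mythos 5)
The paper does not actually prove this theorem: Section 2 states it verbatim as a result ``taken from \cite{BLP2014}'' without proof, so the only comparison available is with the argument in that reference. There, the It\^o formula is obtained by a direct time-discretization: one writes the increment $\Phi(X^{t,x,P_\xi}_{t_{k+1}},P_{X^{t,\xi}_{t_{k+1}}})-\Phi(X^{t,x,P_\xi}_{t_k},P_{X^{t,\xi}_{t_k}})$, Taylor-expands to second order first in the state variable and then in the measure variable using the lifted (Fr\'echet) derivatives $\partial_\mu\Phi$, $\partial_y\partial_\mu\Phi$, $\partial^2_\mu\Phi$, and controls the remainders via the Lipschitz bounds in Hypothesis 2.2. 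Your route --- projecting onto empirical measures of $N$ i.i.d.\ copies driven by independent Brownian motions, applying the classical $(N+1)d$-dimensional It\^o formula through the linking identities $\partial_{x_k}u^N=\frac1N\partial_\mu f(\mu^N,x_k)$ and $\partial_{x_j}\partial_{x_k}u^N=\frac1N\partial_y\partial_\mu f(\mu^N,x_k)\delta_{jk}+\frac1{N^2}\partial^2_\mu f(\mu^N,x_k,x_j)$, and letting $N\to\infty$ --- is a genuinely different and equally legitimate strategy (it is essentially the Chassagneux--Crisan--Delarue / Carmona--Delarue proof). It buys a cleaner conceptual explanation of \emph{why} no $\partial^2_\mu$ or $\partial_x\partial_\mu$ terms survive (independence of the driving noises kills the off-diagonal covariations, and the diagonal $\partial^2_\mu$ contribution carries an extra $1/N$), at the price of having to justify the finite-dimensional derivative identities at empirical measures and a law-of-large-numbers passage to the limit; the direct Taylor expansion avoids the particle system but requires a more delicate bookkeeping of second-order remainders in $W_2$.

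Two points in your sketch need tightening. First, the martingale increments coming from the copy variables, $\sum_{k=1}^N\frac1N(\partial_\mu\Phi)(\cdot,\mu^N_r,X^{(k)}_r)\sigma(X^{(k)}_r,P_{X^{t,\xi}_r})\,dB^{(k)}_r$, do \emph{not} fall under either of your two headings (``sums converging to expectations'' or ``spurious $1/N$ prefactors''): they are $N$ terms each of size $1/N$, and they vanish only because the $B^{(k)}$ are orthogonal martingales, giving an $L^2$-norm of order $N^{-1/2}$. This must be argued explicitly, since it is precisely why the limit formula has no stochastic integral in the measure component. Second, the linking identities you invoke are not in \cite{BLP2014} in the form you attribute to them and are not automatic consequences of $C^{2,1}_b$ regularity; they require the joint continuity of $(\mu,x,y)\mapsto\partial^2_\mu f$ and $\partial_y\partial_\mu f$ supplied by Definition 2.4 and a separate (nontrivial) proof that the lifted derivative restricts correctly to the finite-dimensional empirical-measure submanifold. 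With those two items supplied, the argument is complete.
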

\indent For simplicity, we will make use of the following notations concerning matrices. We denote by $\mathbb{R}^{n\times d}$ the space of real matrices of $n\times d $-type, and by $\mathbb{R}^{n\times n}_d$ the linear space of the vectors of matrices $M=(M_1,\cdots, M_d)$, with $M_i\in \mathbb{R}^{n\times n}$, $1\leq i \leq d.$ Given any $\alpha, \beta\in \mathbb{R}^{n}$,  $L, S \in \mathbb{R}^{n\times d}$,  $\gamma \in \mathbb{R}^{d}$ and  $M, N \in \mathbb{R}^{n\times n}_d$, we introduce the following notation: $\alpha \beta=\sum^{n}_{i=1}\alpha_i \beta_i \in \mathbb{R}$, $\alpha\times\beta=(\alpha_i \beta_j)_{1\leq i,j\leq n}$; $LS=\sum^{d}_{i=1} L_i S_i \in \mathbb{R}$, where $L=(L_1,\cdots, L_d), S=(S_1,\cdots, S _d)$;
$ML=\sum^{d}_{i=1}M_i L_i \in \mathbb{R}^{n}$; $M\alpha \gamma=\sum^{d}_{i=1}(M_i \alpha) \gamma_i \in \mathbb{R}^n$; $MN=\sum^{d}_{i=1}M_i N_i \in \mathbb{R}^{n\times n}$;

\indent For mean-field type SDE and BSDE, we have still to introduce some notations. Let $(\tilde{\Omega},\tilde{\mathcal{F}},\tilde{P})$, $(\bar{\Omega},\bar{\mathcal{F}},\bar{P})$ be two copies of the probability space $(\Omega,{\mathcal{F}},P)$. For any random variable $\xi$ over $(\Omega,{\mathcal{F}},P)$, we denote by $\tilde{\xi}$ and $\bar{\xi}$ its copies on $\tilde{\Omega}$ and $\bar{\Omega}$, respectively, which means that they have the same law as $\xi$, but defined over  $(\tilde{\Omega},\tilde{\mathcal{F}},\tilde{P})$ and $(\bar{\Omega},\bar{\mathcal{F}},\bar{P})$. $\mathbb{\tilde{E}}[\cdot]=\int_{\tilde{\Omega}}(\cdot)d\tilde{P}$ and $\mathbb{\bar{E}}[\cdot]=\int_{\bar{\Omega}}(\cdot)d\bar{P}$ act only over the variables from  $\tilde{\omega}$ and $\bar{\omega}$, respectively.

\section{Formulation of the singular optimal stochastic control problem and the main result}

\indent In this section, we  formulate  our generalized mean-field optimal control problem and state the main result of this article. Let $(\Omega,{\mathcal{F}},P)$ be a probability space with filtration ${\mathcal{F}}_{t}$.
Suppose that ${B}_t$
 is a Brownian motion on
$(\Omega,{\mathcal{F}},P)$, where ${\mathcal{F}}$ is the filtration generated by ${B}_t$,  augmented by all $P$-null sets. Let $\mathcal{U}$ denote the admissible control set consisting of $\mathcal{F}_t$-adapted process $u_t$, take values in $U$, such that $\sup_{0\leq t\leq T}\mathbb{E}|u_t|^8 < \infty$, where $U$ is a subset of $\mathbb{R}^k$. Let $b:[0,T]\times \mathbb{R}^n\times \ \mathcal{P}_2(\mathbb{R}^n) \times U \longrightarrow \mathbb{R}^n,\ \sigma:[0,T]\times \mathbb{R}^n\times \ \mathcal{P}_2(\mathbb{R}^n) \longrightarrow \mathbb{R}^{n\times d}, \ h:[0,T]\times \mathbb{R}^n\times \ \mathcal{P}_2(\mathbb{R}^n) \times U \longrightarrow \mathbb{R}$, and $\Phi:\mathbb{R}^n\times \mathcal{P}_2(\mathbb{R}^n) \longrightarrow \mathbb{R}.$

 The state equation and the cost functional are defined by (\ref{eq210c}) and (\ref{eq211c}). Throughout this paper, we make the following assumptions on the coefficients:

\begin{hypothesis}\label{H1}
(1) The functions $b, \sigma, h ,\Phi$ are differentiable with respect to $(x,\mu, v).$ $b,\sigma$ satisfy Lipschitz condition with respect to $(x,\mu,v).$\\
(2) The first-order derivatives with respect to $(x,\mu)$ of $b, \sigma$ are Lipschitz continuous and bounded.\\
(3)  The first-order derivatives  with respect to $(x,\mu)$ of $h, \Phi$ are Lipschitz continuous and bounded by $C(1+|x|+|v| )$.\\
(4) The second-order derivatives with respect to $(x,\mu) $ of $b, \sigma, h ,\Phi$ are continuous and bounded. All the second-order derivatives are Borel measurable with respect to $(t,x,\mu,v).$
\end{hypothesis}

\indent Suppose that $u$ is an optimal control and $X^u$ is the associated trajectory. We are to find the necessary conditions satisfied by $u$. Firstly, we introduce the following abbreviations:
 $$b(t):=b(t,X^u_t, P_{X^u_t},u_t),
 \ b_x(t):=b_x(t,X^u_t,P_{X^u_t},u_t),$$
 $$\tilde{b}(t):=b(t,\tilde{X}^u_t, P_{\tilde{X}^u_t},\tilde{u}_t), \ \tilde{b}_x(t):=b_x(t,\tilde{X}^u_t,P_{\tilde{X}^u_t},\tilde{u}_t),$$
 $$b_{xx}(t):=b_{xx}(t,X^u_t,P_{X^u_t},u_t),\ b_{\mu }(t):=b_{\mu }(t,X^u_t,P_{X^u_t},\tilde{X}^u_t, u_t),$$
 $$ \tilde{b}_{\mu }(t):=b_{\mu }(t,\tilde{X}^u_t,P_{\tilde{X}^u_t},{X}^u_t, \tilde{u}_t), b_{\mu \mu }(t):=b_{\mu \mu}(t,X^u_t,P_{X^u_t},\tilde{X}^u_t, \bar{X}^u_t, u_t),$$
 $$\tilde{\bar{b}}_{\mu \mu }(t):=b_{\mu \mu}(t,\tilde{X}^u_t,P_{\tilde{X}^u_t},\bar{X}^u_t, {X}^u_t, \tilde{u}_t), \ b_{x  \mu  }(t):=b_{x \mu }(t,X^u_t,P_{X^u_t},\tilde{X}^u_t, u_t),$$
 $$\ b_{y  \mu  }(t):=b_{y \mu }(t,X^u_t,P_{X^u_t},\tilde{X}^u_t, u_t), \ \bigtriangleup b(t;v):=b(t,X^u_t,P_{X^u_t},v)-b(t),$$
 $$\bigtriangleup b_x(t;v):=b_x(t,X^u_t,P_{X^u_t},v)-b_x(t), \ \bigtriangleup b_{\mu }(t;v):=b_{\mu }(t,X^u_t,P_{X^u_t},\tilde{X}^u_t, v)-b_{\mu }(t),$$

 Similar shorthand notations for the second-order derivatives and those about $ \sigma, h $ can also be introduced.\\
\indent Consider the first order adjont process
\begin{equation}\label{eq210d}
\left\{
\begin{array}{rcl}
 -dp_t&=& \bigg\{ b_x(t)p_t+\sigma_x(t)q_t+h_x(t)\\
&&+\tilde{\mathbb{E}}\Big[\tilde{b}_{\mu }(t)\tilde{p}_t+\tilde{\sigma}_{\mu }(t)\tilde{q}_t+\tilde{h}_{\mu }(t)\Big] \bigg\} dt-q_tdB_t,\\
p_T&=&\Phi_x(X^u_T,P_{X^u_T})+\tilde{\mathbb{E}}\left[\Phi_{\mu }(\tilde{X}^u_T,P_{\tilde{X}^u_T},X^u_T)\right].
   \end{array}
  \right.
\end{equation}
According to Theorem 3.1 \cite{BLP2009}, this BSDE admit a unique adapted solution. We also denote the solution as $(p^u_t, q^u_t).$ Define the Hamiltonian as follows:
$$H(t,x,\mu ,p,q,v)=pb(t,x,\mu,v)+q\sigma(t,x,\mu)+h(t,x,\mu,v)$$

The following first-order SMP is obtained as a special case of \cite{BLM2016}.

\begin{theorem}[The First Order SMP] Let Hypothesis \ref{H1} hold. Suppose that $X^u_t$ is the associated trajectory of the optimal control $u$, and $(p,q)$ is the solution to the mean-field backward stochastic differential equation (MFBSDE) (\ref{eq210d}). Then, there is a subset $I_0 \subset [0,T]$ which is of full measure such that  $\forall t\in I_0$,
\begin{eqnarray}
  H(t,X^u_t,P_{X^u_t},p_t,q_t,u_t)=\inf_{v\in \mathcal{U}}H(t,X^u_t,P_{X^u_t},p_t,q_t,v), \ \ a.s..
  \end{eqnarray}
\end{theorem}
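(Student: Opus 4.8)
The plan is to prove this first-order SMP --- which, as noted, is a specialisation of \cite{BLM2016} --- by the classical \emph{spike (needle) variation}. The feature that makes the first-order adjoint pair $(p,q)$ of (\ref{eq210d}) already sufficient, with no second-order adjoint, is that the diffusion coefficient $\sigma$ in (\ref{eq210c}) does not depend on the control variable. I would fix an admissible $v\in\mathcal U$ and a point $t_0\in[0,T)$, and for small $\ep>0$ set $E_\ep=[t_0,t_0+\ep]$ and $u^\ep_t=v_t\mathbf 1_{E_\ep}(t)+u_t\big(1-\mathbf 1_{E_\ep}(t)\big)\in\mathcal U$, with state process $X^\ep:=X^{u^\ep}$. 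Introduce $y^\ep$ as the solution of the linear mean-field SDE with $y^\ep_0=0$, drift coefficient $b_x(t)y^\ep_t+\tilde{\mathbb E}[\tilde b_\mu(t)\tilde y^\ep_t]+\mathbf 1_{E_\ep}(t)\bigtriangleup b(t;v_t)$ and diffusion coefficient $\sigma_x(t)y^\ep_t+\tilde{\mathbb E}[\tilde\sigma_\mu(t)\tilde y^\ep_t]$. Using Hypothesis \ref{H1}, the $W_2$-Lipschitz continuity of the coefficients, Burkholder--Davis--Gundy and Gronwall, I would establish $\mathbb E[\sup_{t\le T}|X^\ep_t-X^u_t|^2]=O(\ep^2)$ --- here the $v$-independence of $\sigma$ is exactly what removes the $O(\sqrt\ep)$ term present in Peng's general setting \cite{P1990} --- and then $\mathbb E[\sup_{t\le T}|y^\ep_t|^2]=O(\ep^2)$ together with $\mathbb E[\sup_{t\le T}|X^\ep_t-X^u_t-y^\ep_t|^2]=o(\ep^2)$.

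Next I would Taylor-expand the cost functional (\ref{eq211c}) to first order: the estimates above and the growth/Lipschitz bounds on $h_x,h_\mu,\Phi_x,\Phi_\mu$ from Hypothesis \ref{H1} express $J(u^\ep)-J(u)$ as a linear functional of $y^\ep$ (the derivative-in-state and derivative-in-measure terms of $h$ and $\Phi$ paired with $y^\ep$) plus $\mathbb E\int_{E_\ep}\bigtriangleup h(t;v_t)\,dt$ plus $o(\ep)$. Applying It\^o's formula to $p_ty^\ep_t$ on $[0,T]$ and taking expectations, the terms $b_x(t)p_ty^\ep_t$ and $\sigma_x(t)q_ty^\ep_t$ produced by the two equations cancel, while each $\partial_\mu b,\partial_\mu\sigma,\partial_\mu h$ term (and $\partial_\mu\Phi$ at $T$) of the adjoint equation (\ref{eq210d}) cancels against the corresponding mean-field term of the $y^\ep$-equation and of the cost expansion, after using the copy-space Fubini identity that exchanges the roles of $(\Omega,P)$ and $(\tilde\Omega,\tilde P)$. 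What survives is only the spike contribution, so that
\begin{equation*}
\begin{split}
J(u^\ep)-J(u)={}&\mathbb E\int_{t_0}^{t_0+\ep}\Big(H\big(t,X^u_t,P_{X^u_t},p_t,q_t,v_t\big)\\
&\qquad-H\big(t,X^u_t,P_{X^u_t},p_t,q_t,u_t\big)\Big)\,dt+o(\ep),
\end{split}
\end{equation*}
where I used $\bigtriangleup h(t;v_t)+p_t\bigtriangleup b(t;v_t)=H(t,X^u_t,P_{X^u_t},p_t,q_t,v_t)-H(t,X^u_t,P_{X^u_t},p_t,q_t,u_t)$, which holds because $q_t\sigma(t,X^u_t,P_{X^u_t})$ is unchanged by the variation.

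Since $u$ is optimal, $J(u^\ep)-J(u)\ge 0$; dividing by $\ep$ and letting $\ep\downarrow 0$, the Lebesgue differentiation theorem gives, for a.e. $t_0\in[0,T]$ (a null set a priori depending on $v$), $\mathbb E\big[H(t_0,X^u_{t_0},P_{X^u_{t_0}},p_{t_0},q_{t_0},v_{t_0})-H(t_0,X^u_{t_0},P_{X^u_{t_0}},p_{t_0},q_{t_0},u_{t_0})\big]\ge 0$. Taking $v_t=w\,\mathbf 1_A+u_t\big(1-\mathbf 1_A\big)$ with $w\in U$ and $A\in\mathcal F_{t_0}$, and using that $H(t_0,X^u_{t_0},P_{X^u_{t_0}},p_{t_0},q_{t_0},\cdot)$ is $\mathcal F_{t_0}$-measurable, this localises to $H(t_0,X^u_{t_0},P_{X^u_{t_0}},p_{t_0},q_{t_0},w)\ge H(t_0,X^u_{t_0},P_{X^u_{t_0}},p_{t_0},q_{t_0},u_{t_0})$ a.s.; letting $w$ run over a countable dense subset of $U$ and using the continuity of $H$ in $v$ (Hypothesis \ref{H1}(1)), I obtain on a common full-measure set $I_0$ the minimality for all $v\in U$, which is the assertion of the theorem.

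\textbf{Main obstacle.} The technically delicate step is the duality computation: one must keep track of the two copy probability spaces and verify that \emph{every} $\partial_\mu$-term arising from It\^o's formula for $p_ty^\ep_t$ is exactly matched and cancelled by the corresponding mean-field term in (\ref{eq210d}) after the Fubini swap --- this is precisely what forces the form of the adjoint equation. The measurable-selection argument of the last step and the $o(\ep^2)$ remainder estimate of the first step are routine, but the latter has to be carried out carefully using the $W_2$-Lipschitz bounds of Hypothesis \ref{H1}.
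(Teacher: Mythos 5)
Your proposal is correct in substance, but it takes a genuinely different route from the paper. The paper does not use a needle variation concentrated on an interval $[t_0,t_0+\ep]$: it perturbs $u$ on a set $I_\rho$ with $|I_\rho|=\rho T$ chosen via the range theorem for vector-valued measures (Lemma 4.1 of \cite{MY2007}, reproved as Lemma \ref{lm52}), so that the global expansion (\ref{eq55}) holds, $J(\hat v)-J(u)=\rho\,\mathbb{E}\int_0^T\bigtriangleup H(s;v)\,ds+O(\rho^2)$; the first-order SMP then follows by dividing by $\rho$, letting $\rho\to 0+$, and running a contradiction argument with $v=v^*\text{1}_A+u\text{1}_{A^c}$ followed by the countable-density step. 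Your spike variation plus first-order Taylor expansion, duality via It\^o applied to $p_ty^\ep_t$ with the copy-space Fubini swap, Lebesgue differentiation, and localization over $\mathcal F_{t_0}$-sets reaches the same conclusion, and your observation that the control-independence of $\sigma$ is what makes the purely first-order expansion (no second-order adjoint) sufficient is exactly right. What each approach buys: yours is more elementary and self-contained for the first-order statement, and avoids the vector-measure machinery entirely; the paper's construction front-loads that machinery because the same $I_\rho$-expansion, carried to order $\rho^2$, is what delivers the second-order SMP of Theorem \ref{thm0209a} on the singular region, so the first-order result drops out as a by-product of preparations needed anyway. One technical point you wave at but should pin down if writing this in full: to obtain a single full-measure set $I_0$ of times, the exceptional null set produced by the Lebesgue differentiation step depends on the pair $(w,A)$, and since $\mathcal F_{t_0}$ varies with $t_0$ you need the rational-time/countable-generator bookkeeping that the paper deploys in its second-order proof (the sets $A_{i,j}$ generating $\mathcal F_{t_i}$); the paper's first-order argument sidesteps this because its integrated inequality (\ref{eq63}) holds for every admissible $v$ at once, and the localization is absorbed into the choice of $v^*\text{1}_A+u\text{1}_{A^c}$ before any differentiation in time is performed.
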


 As we pointed out in the introduction, the aim of this article is to derive another SMP when the Hamiltonian function  above becomes singular, and hence, the SMP above is not suitable for characterizing of the optimal control $u_t$. To this end, we define the second-order adjoint process as follows:
\begin{equation}\label{eq210b}
\left \{
\begin{array}{rcl}
dP_t&=&-\bigg \{ {b}^*_x(t)P_t+P_tb_x(t)+\tilde{\mathbb{E}}\left[\tilde{b}_{\mu}^*(t)\right]{P}_t +{P}_t\tilde{\mathbb{E}}\left[\tilde{b}_{\mu}(t) \right]\\
&&+{\sigma}^*_x(t)P_t\sigma_x(t)
+\tilde{\mathbb{E}}\left[\tilde{\sigma}^*_{\mu}(t)\right]P_t\tilde{\mathbb{E}}\left[\tilde{\sigma}_{\mu}(t) \right]\\
&&+{\sigma}^*_{x}(t)P_t\tilde{\mathbb{E}}\left[\tilde{\sigma}_{\mu}(t) \right]+\tilde{\mathbb{E}}\left[\tilde{\sigma}^*_{\mu}(t)\right]P_t {\sigma}_{x}(t) \\
&&+{\sigma}^*_x(t)Q_t+P_t\sigma_x(t)+\tilde{\mathbb{E}}\left[\tilde{\sigma}_{\mu}^*(t)\right]{Q}_t +{Q}_t\tilde{\mathbb{E}}\left[\tilde{\sigma}_{\mu}(t) \right]\\
&&+H_{xx}(t)+\tilde{\mathbb{E}}\bar{\mathbb{E}}\left[\tilde{\bar{H}}_{\mu\mu}(t) \right]+\tilde{\mathbb{E}}\left[\tilde{H}_{y\mu}(t) \right]+2\tilde{\mathbb{E}}\left[\tilde{H}_{x\mu}(t) \right]\bigg \}dt\\
&&+Q_tdB_t,\\
P_T&=&0.
\end{array}
\right.
\end{equation}

\begin{remark}
By changing the terminal condition $p_T$, we can always eliminate the terminal cost when deducing the variational inequality. In fact, the terminal condition $P_T=0$ is due to the assumption that $\Phi\equiv 0$. Without this assumption, we only need to set
\begin{eqnarray}
P_T&=&\Phi_{xx}(X^u_T,P_{X^u_T})+2\tilde{\mathbb{E}}\left[\Phi_{x\mu}(X^u_T,P_{X^u_T},\tilde{X}^u_T)\right] \nonumber\\
&&+\bar{\mathbb{E}}\tilde{\mathbb{E}}\left[\Phi_{\mu\mu}(X^u_T,P_{X^u_T},\tilde{X}^u_T,\bar{X}^u_T)\right]+\tilde{\mathbb{E}}\left[\Phi_{y\mu}(X^u_T,P_{X^u_T},\tilde{X}^u_T)\right].
\end{eqnarray}
Without loss of generality, we assume the terminal cost $\Phi \equiv 0$ in the following sections.
\end{remark}

Finally, we present our main result in this article.
\begin{theorem}\label{thm0209a}
Assume that Hypothesis \ref{H1} hold. Let $\big(X^u_{\cdot}, u_{\cdot}\big)$ be an optimal pair and let $u_{\cdot}$ be singular on the control region $V$. Suppose that $(P, Q)$ is the unique adapted solution of equation (\ref{eq210b}). Then, there is a full measure subset $I_0\subset [0,T]$ such that at each $t\in I_0$, $\big(X^u_{\cdot}, u_{\cdot}\big)$ satisfies, not only the first-order stochastic maximum principle, but also the following inequality
 \begin{eqnarray}
 &&\bigtriangleup H_x(t;v)\bigtriangleup b(t;v)+\tilde{\mathbb{E}}\left[\bigtriangleup H_\mu(t;v)\bigtriangleup \tilde{b}(t;v)\right]\nonumber\\
&&+\bigtriangleup {b}^*(t;v) P_t\bigtriangleup b(t;v)\geq 0,\ \forall v\in U, a.s..
\end{eqnarray}
\end{theorem}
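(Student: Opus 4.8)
The plan is to carry out a spike variation of the optimal control $u$ on a small time interval and expand the cost functional to second order in the length $\rho$ of the perturbation interval, exploiting the fact that the first-order term vanishes identically by the singularity assumption. Fix a Lebesgue point $t$ of all the data, fix $v\in U$, and for $\rho>0$ small let $E_\rho\subset[t,t+\rho]$ be a measurable set and set $u^\rho_s = v$ on $E_\rho$ and $u^\rho_s=u_s$ elsewhere. Let $X^\rho$ be the corresponding state. Following Sections 4 and 5 of the paper, I would introduce the first- and second-order variational processes $y^\rho, z^\rho$ solving the linearized mean-field SDEs driven by the perturbation, obtain the estimates $\mathbb{E}\sup|y^\rho|^2 = O(|E_\rho|)$, $\mathbb{E}\sup|z^\rho|^2 = O(|E_\rho|^2)$, and $\mathbb{E}\sup|X^\rho - X^u - y^\rho - z^\rho|^2 = o(|E_\rho|^2)$, which is where Hypothesis \ref{H1}(4) (bounded second derivatives) and the $L^8$ integrability of controls are used.

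Next I would expand $J(u^\rho)-J(u)\geq 0$. Using the Itô formula for functions of a measure (the Theorem quoted in Section 2) applied to $p_t y^\rho_t$, $p_t z^\rho_t$ and $\tfrac12\, (y^\rho_t)^* P_t\, y^\rho_t$, together with the adjoint equations (\ref{eq210d}) and (\ref{eq210b}), the duality computation should collapse the expansion to
\[
0 \le J(u^\rho)-J(u) = \int_{E_\rho}\!\Big(\bigtriangleup H(s;v)\Big)\,ds \;+\; \tfrac12\int_{E_\rho}\!\Big(\text{second-order integrand}\Big)\,ds \;+\; o(|E_\rho|),
\]
where $\bigtriangleup H(s;v)=H(s,X^u_s,P_{X^u_s},p_s,q_s,v)-H(s)$ and the second-order integrand is precisely
\[
\bigtriangleup H_x(s;v)\bigtriangleup b(s;v)+\tilde{\mathbb{E}}\big[\bigtriangleup H_\mu(s;v)\bigtriangleup\tilde b(s;v)\big]+\bigtriangleup b^*(s;v)P_s\bigtriangleup b(s;v).
\]
By the singularity of $u$ on $V$, and by first-order optimality on the complement, the first integral is nonnegative but, more importantly, can be made to contribute nothing at the relevant order: the key device is the range theorem for vector-valued (nonatomic) measures, which lets me choose the sets $E_\rho$ so that $\int_{E_\rho}\bigtriangleup H(s;v)\,ds$ is exactly zero (or $o(|E_\rho|)$) while $|E_\rho|\sim\rho$ stays genuinely of order $\rho$. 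Dividing by $|E_\rho|$ and letting $\rho\downarrow0$, a Lebesgue-point argument then yields the asserted inequality at a.e.\ $t$, for each fixed $v$; a separability/countable-dense argument in $v\in U$ upgrades this to a common full-measure set $I_0$ and all $v$ simultaneously.

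The main obstacle is the combination of two things. First, the duality bookkeeping: because every coefficient depends on the law $P_{X^u_t}$, each application of the mean-field Itô formula generates extra $\tilde{\mathbb{E}}$ terms involving $b_\mu,\sigma_\mu,b_{\mu\mu},b_{x\mu},b_{y\mu}$ and their $\sigma,H$ analogues, and one must verify that these are exactly the terms assembled into the second-order adjoint equation (\ref{eq210b}) so that the cross terms cancel and only the stated quadratic form survives; this is a long but essentially mechanical calculation that I would organize by matching, term by term, the $dt$-generator of $\tfrac12(y^\rho)^*P y^\rho + p z^\rho$ against the right-hand sides of (\ref{eq210d})–(\ref{eq210b}). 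Second, the correct application of the Lyapunov/range theorem: one must check the relevant vector measure $A\mapsto\big(|A|,\int_A\bigtriangleup H(s;v)\,ds\big)$ is nonatomic so its range is convex, and that the point $(\rho,0)$ lies in that range for small $\rho$ — this is exactly where the value $H(s;v)\equiv H(s)$ a.e.\ on the singular region is needed, since it forces $\int_A \bigtriangleup H(s;v)\,ds=0$ for every $A$, trivializing the selection. I expect the measure-theoretic step to be short once set up correctly, and the bulk of the work to be the second-order expansion and the adjoint-matching identity.
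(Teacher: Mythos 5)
Your overall architecture --- spike variation, first- and second-order variational processes, duality against the two adjoint equations, the range theorem for vector measures, a Lebesgue-point limit, and a countable-density upgrade in $v$ --- is the same as the paper's, and your identification of the final quadratic form and of the role of singularity in killing the first-order term is correct. However, there is a genuine gap in the order-of-magnitude bookkeeping, and it sits exactly where the second-order information must be extracted. Because $\sigma$ in (\ref{eq210c}) does not contain the control, the correct estimates (Lemma \ref{lm42}) are $\mathbb{E}\sup_t|X^{v,1}_t|^2\le K d^2(v,u)$ and $\mathbb{E}\sup_t|X^{v,2}_t|^2\le K d^4(v,u)$, not the $O(|E_\rho|)$ and $O(|E_\rho|^2)$ you assert; those are Peng's orders for a control-dependent diffusion, where the first variation is of size $\sqrt{\rho}$. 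Consequently the second-order contribution to $J(u^\rho)-J(u)$ is $O(|E_\rho|^2)$, not $O(|E_\rho|)$: in the expansion (\ref{eq51}) it appears as $\mathbb{E}\int_0^T\{\bigtriangleup H_x(s;v)X^{v,1}_s+\tilde{\mathbb{E}}[\bigtriangleup H_\mu(s;v)\tilde X^{v,1}_s]+\bigtriangleup b^*(s;v)P_sX^{v,1}_s\}\,ds$ with $X^{v,1}$ itself of size $|E_\rho|$, and \emph{not} as $\tfrac12\int_{E_\rho}Q(s)\,ds$ with $Q$ the pointwise quadratic form. Dividing by $|E_\rho|$ and letting $\rho\downarrow 0$, as you propose, therefore only reproduces the first-order condition and loses the second-order term entirely.

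Two ingredients are needed to close this. First, the range theorem is used in the paper not to make $\int_{E_\rho}\bigtriangleup H(s;v)\,ds$ vanish (that is automatic from singularity once $v$ takes values in $V$), but to choose $I_\rho$ with $|I_\rho|=\rho T$ so that the integrals over $I_\rho$ of the relevant processes equal $\rho$ times their integrals over $[0,T]$ up to $o(\rho^2)$ errors ((\ref{eq58})--(\ref{eq53}), Lemma \ref{lm52}); this is what turns the perturbation expansion into a genuine power series in $\rho$ and isolates the coefficient of $\rho^2$ as the integral inequality (\ref{eq64}). Second, to pass from that integral inequality to the pointwise statement one needs the variation-of-constants representation $X^{\hat v,1}_s=\Phi(s)\int_\tau^s\Phi^{-1}(r)\bigtriangleup b(r;v)\,dr+A^*_s$ of Lemma \ref{lm54}, which for a spike on $[\tau,\tau+\theta)$ makes the integrand of order $(s-\tau)$; one then divides by $\theta^{2}$ (not $\theta$) and applies the Lebesgue differentiation argument of (\ref{eq65}) to recover $\bigtriangleup H_x(\tau;v)\bigtriangleup b(\tau;v)+\tilde{\mathbb{E}}[\bigtriangleup H_\mu(\tau;v)\bigtriangleup\tilde b(\tau;v)]+\bigtriangleup b^*(\tau;v)P_\tau\bigtriangleup b(\tau;v)\ge 0$. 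Your proposal contains neither the $\theta^{-2}$ normalization nor the $\Phi$-representation, so as written it would not produce the theorem.
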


\section{Quantitative analysis of the impact of control actions on the state}
\indent In this section, we expand the state process according to different orders of the perturbation parameter $d(u,v)$, a distance between the optimal control $u$ and its perturbation $v$.

\begin{lemma}
Under Hypothesis \ref{H1} on the coefficients, we have,
$$\mathbb{E} \sup_{0\leq t \leq T}|X^{v}_t|^8 \leq K\left(1+\mathbb{E}\left|\int^T_0 |v_s|ds\right|^8 \right), $$
\end{lemma}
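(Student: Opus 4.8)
The plan is to carry out the classical $L^{p}$ a priori moment estimate for the state equation at the exponent $p=8$: localize by stopping times so that all quantities are a priori finite, decompose $|X^{v}|^{8}$ into drift and diffusion contributions, use Hölder's inequality to turn the time integrals into the right powers, the Burkholder--Davis--Gundy (BDG) inequality for the stochastic term, and close with Gronwall's lemma. The one feature beyond the classical non--mean-field situation is the dependence of $b,\sigma$ on the law $P_{X^{v}_{t}}$; this forces an auxiliary second-moment bound, which I would record first.

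First I would fix $v$ and set $\tau_{N}=\inf\{t\ge 0:|X^{v}_{t}|\ge N\}\wedge T$, so that $\phi_{N}(t):=\mathbb{E}\big[\sup_{0\le s\le t}|X^{v}_{s\wedge\tau_{N}}|^{8}\big]\le N^{8}<\infty$ and $\tau_{N}\uparrow T$ a.s.\ by path-continuity of $X^{v}$. By Hypothesis \ref{H1}, Lipschitz continuity of $b,\sigma$ in $(x,\mu,v)$ yields the linear growth bounds $|b(t,x,\mu,v)|\le C\big(1+|x|+(\int|y|^{2}\mu(dy))^{1/2}+|v|\big)$ and $|\sigma(t,x,\mu)|\le C\big(1+|x|+(\int|y|^{2}\mu(dy))^{1/2}\big)$. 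Starting from the integral form of (\ref{eq210c}) and $(a+b+c)^{8}\le C(a^{8}+b^{8}+c^{8})$: for the drift I would first peel off the control using Lipschitzness in $v$, then apply Hölder, obtaining a bound by $C\,\mathbb{E}|\int_{0}^{T}|v_{s}|ds|^{8}+C\,T^{7}\int_{0}^{t}\big(1+\phi_{N}(r)+(\mathbb{E}|X^{v}_{r}|^{2})^{4}\big)dr$; for the diffusion, BDG at exponent $8$ followed by Hölder produces a bound by $C\,T^{3}\int_{0}^{t}\big(1+\phi_{N}(r)+(\mathbb{E}|X^{v}_{r}|^{2})^{4}\big)dr$, after replacing the unstopped integrands by their stopped versions on $\{r<\tau_{N}\}$.

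The terms $(\mathbb{E}|X^{v}_{r}|^{2})^{4}$, arising from the measure component of the coefficients, refer to the unstopped process and are not controlled by $\phi_{N}$, so they must be handled separately. I would run the same argument at the exponent $2$ but for the \emph{scalar} function $m(t):=\mathbb{E}|X^{v}_{t}|^{2}$, which is finite because $X^{v}\in\mathcal{S}^{2}$; Gronwall then gives $\sup_{t\le T}m(t)\le K\big(1+\mathbb{E}|\int_{0}^{T}|v_{s}|ds|^{2}\big)$, whence, by Hölder, $\sup_{r\le T}(\mathbb{E}|X^{v}_{r}|^{2})^{4}\le C\big(1+\mathbb{E}|\int_{0}^{T}|v_{s}|ds|^{8}\big)$. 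Substituting this into the estimates above and collecting terms, one arrives at
\[
\phi_{N}(t)\le C\Big(1+|x|^{8}+\mathbb{E}\,\Big|\int_{0}^{T}|v_{s}|\,ds\Big|^{8}\Big)+C\int_{0}^{t}\phi_{N}(r)\,dr,
\]
with $\phi_{N}$ finite. Gronwall's lemma then gives $\phi_{N}(T)\le K\big(1+\mathbb{E}|\int_{0}^{T}|v_{s}|ds|^{8}\big)$ with $K$ independent of $N$, and letting $N\to\infty$ by monotone convergence (since $\sup_{s\le T}|X^{v}_{s\wedge\tau_{N}}|^{8}\uparrow\sup_{s\le T}|X^{v}_{s}|^{8}$) completes the proof.

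The step I expect to be the main obstacle is not any individual estimate but the bookkeeping around the distribution-dependent terms: one must secure the second-moment bound on $m(t)$ \emph{before} running the $L^{8}$ estimate, otherwise the mean-field contribution makes the Gronwall argument circular; and one must track the Hölder exponents ($t^{7}$ for the drift, $t^{3}$ for the diffusion after BDG at power $8$) so that the final constant depends on $v$ only through $\mathbb{E}|\int_{0}^{T}|v_{s}|ds|^{8}$ and not through $\mathbb{E}\int_{0}^{T}|v_{s}|^{8}ds$.
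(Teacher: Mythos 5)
Your proof is correct and follows essentially the same route as the paper's (linear growth of the coefficients, Burkholder--Davis--Gundy, H\"older, Gronwall); the paper's two-line version simply absorbs the law-dependent term via $(\mathbb{E}|X^v_r|^2)^4 \le \mathbb{E}\sup_{0\le s\le r}|X^v_s|^8$ (Jensen) instead of your auxiliary second-moment estimate, so no circularity arises there either. Your localization and the preliminary bound on $\mathbb{E}|X^v_t|^2$ are a more careful write-up of the same argument, not a different one.
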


\begin{proof}
By the state equation (\ref{eq210c}), for $\tau\in[0,T]$ we have,
\begin{eqnarray}
  \mathbb{E}\sup_{0\leq t \leq \tau}|X^v_t|^8 &\leq &K \mathbb{E}\bigg(|x|^8 +\sup_{0\leq t \leq \tau}\left|\int^t_0 b(s,X^v_s,P_ {{X^v_s}},v_s) ds\right|^8\nonumber \\
  &&+\Big(\int^\tau_0 |\sigma(s,X^v_s,P_ {{X^v_s}})|^2 ds\Big)^4\bigg)\nonumber \\
  &\leq &K\big(|x|^8+ \mathbb{E}\int^\tau_0\sup_{0\leq s \leq r}| X^v_s|^8 dr+ \mathbb{E}\left|\int^T_0 |v_s| ds\right|^8\bigg)
\end{eqnarray}
From Gronwall's inequality, we then have the desired result.
\end{proof}

\indent For $v_i \in \mathcal{U}$, $i=1,2$, we define
$$I(v_1,v_2)=\left\{ t\in [0,T]\big |P(\{ \omega : v_1(t)\neq v_2(t)\} )>0 \right\}$$
and $d(v_1,v_2)=|I(v_1,v_2)|$ is the Lebesgue measure of $I(v_1,v_2)$. Then, $(\mathcal{U},d)$ is a metric space.
 
 Given the optimal pair $(X^u_\cdot, u_\cdot)$, we now proceed to the perturbation $X^v$ of $X^u$.\\
 Let
 \begin{eqnarray}\label{eq41}
  X^{v,1}_t&=&\int^t_0 \bigg\{ b_x(s)X^{v,1}_s+\tilde{\mathbb{E}}\left[b_{\mu}(s) \tilde{X}^{v,1}_s\right]+\bigtriangleup b(s,v) \bigg\}ds\nonumber\\
  &&+\int^t_0\bigg \{ \sigma_x(s)X^{v,1}_s+\tilde{\mathbb{E}}\left[\sigma_{\mu}(s) \tilde{X}^{v,1}_s\right]\bigg \}dB_s\nonumber\\
  &:=&\int^t_0  b^1(s,v) ds+\int^t_0 \sigma^1(s,v)dB_s,
\end{eqnarray}
and
 \begin{eqnarray}\label{eq42}
  X^{v,2}_t&=&\int^t_0\bigg \{ b_x(s)X^{v,2}_s+\tilde{\mathbb{E}}\left[b_{\mu}(s) \tilde{X}^{v,2}_s\right]+\bigtriangleup b_x(s,v)X^{v,1}_s+\tilde{\mathbb{E}}\left[\bigtriangleup b_{\mu}(s,v)\tilde{X}^{v,1}_s\right] \nonumber\\
 &&+\frac{1}{2} b_{xx}(s)X^{v,1}_s\times X^{v,1}_s+ \tilde{\mathbb{E}}\left[ b_{x \mu}(s)X^{v,1}_s\times \tilde{X}^{v,1}_s\right]\nonumber\\
 &&+\frac{1}{2} \tilde{\mathbb{E}}\left[ b_{y \mu}(s)\tilde{X}^{v,1}_s\times \tilde{X}^{v,1}_s\right]+\frac{1}{2}\bar{\mathbb{E}} \tilde{\mathbb{E}}\left[ b_{\mu \mu}(s)\tilde{X}^{v,1}_s\times \bar{X}^{v,1}_s\right] \bigg\}ds\nonumber\\
 &&+\int^t_0 \bigg\{ \sigma_x(s)X^{v,2}_s+\tilde{\mathbb{E}}\left[\sigma_{\mu}(s) \tilde{X}^{v,2}_s\right]\nonumber\\
 &&+\frac{1}{2} \sigma_{xx}(s)X^{v,1}_s\times X^{v,1}_s+\tilde{\mathbb{E}}\left[ \sigma_{x \mu}(s)X^{v,1}_s\times \tilde{X}^{v,1}_s\right]\nonumber\\
 &&+\frac{1}{2}\tilde{\mathbb{E}}\left[ \sigma_{y \mu}(s)\tilde{X}^{v,1}_s\times \tilde{X}^{v,1}_s\right]+\frac{1}{2}\bar{\mathbb{E}} \tilde{\mathbb{E}}\left[ \sigma_{\mu \mu}(s)\tilde{X}^{v,1}_s\times \bar{X}^{v,1}_s\right] \bigg\}dB_s\nonumber\\
   &:=&\int^t_0  b^2(s,v) ds+\int^t_0 \sigma^2(s,v)dB_s
  \end{eqnarray}
 Denote
  \begin{eqnarray} \label{eq43}
  X^{v*}_{.}:=X^*_{.}(v)=X^u_{.}-X^{v,1}_{.}-X^{v,2}_{.}-X^v_{.}.
 \end{eqnarray}
  The following lemmas give the estimation of their orders according to parameter $d(v,u)$.
\begin{lemma}\label{lm42}
Assume that Hypothesis \ref{H1} holds. Then, there exists a $K> 0$, such that for any $v(\cdot), u(\cdot)\in \mathcal{U}$, we have 
\begin{eqnarray}
 \mathbb{E}\sup_{0\leq t \leq T}|X^{v,1}_t|^2&\leq&Kd^2(v,u),\\\label{eq411}
 \mathbb{E}\sup_{0\leq t \leq T}|X^{v,2}_t|^2&\leq&Kd^{4}(v,u)\label{eq412}.
 \end{eqnarray}
\end{lemma}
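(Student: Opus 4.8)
The plan is to estimate the processes $X^{v,1}$ and $X^{v,2}$ by the same Gronwall-type argument used in the preceding lemma, taking advantage of the explicit linear structure of the equations \eqref{eq41} and \eqref{eq42} and the fact that the only genuinely ``inhomogeneous'' term, $\bigtriangleup b(s,v)$, is supported on the set $I(u,v)$, which has Lebesgue measure $d(v,u)$.

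First I would treat $X^{v,1}$. Writing the stochastic equation \eqref{eq41} and applying the Burkholder--Davis--Gundy inequality to the martingale part together with the elementary inequality $|a+b+c|^2\le 3(|a|^2+|b|^2+|c|^2)$, I would obtain, for $\tau\in[0,T]$,
\begin{equation*}
\mathbb{E}\sup_{0\le t\le \tau}|X^{v,1}_t|^2 \le K\,\mathbb{E}\int_0^\tau \sup_{0\le s\le r}|X^{v,1}_s|^2\,dr + K\,\mathbb{E}\Big|\int_0^T |\bigtriangleup b(s,v)|\,ds\Big|^2,
\end{equation*}
where the first term collects the contributions of $b_x(s)X^{v,1}_s$, $\tilde{\mathbb{E}}[b_\mu(s)\tilde X^{v,1}_s]$ and their diffusion analogues — here one uses that $b_x,b_\mu,\sigma_x,\sigma_\mu$ are bounded (Hypothesis \ref{H1}(2)) and that, by Fubini, $\mathbb{E}|\tilde{\mathbb{E}}[b_\mu(s)\tilde X^{v,1}_s]|^2\le K\,\mathbb{E}|X^{v,1}_s|^2$. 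Since $\bigtriangleup b(s,v)=0$ for $s\notin I(u,v)$ and $|\bigtriangleup b(s,v)|\le C(1+|X^u_s|+|v|)$ is bounded in $L^2$ uniformly in $s$ (using $\sup_t\mathbb{E}|u_t|^8<\infty$, $\sup_t\mathbb{E}|v_t|^8<\infty$, and the first lemma), the Cauchy--Schwarz inequality gives
\begin{equation*}
\mathbb{E}\Big|\int_0^T |\bigtriangleup b(s,v)|\,ds\Big|^2 \le |I(u,v)|\int_{I(u,v)} \mathbb{E}|\bigtriangleup b(s,v)|^2\,ds \le K\, d^2(v,u).
\end{equation*}
Gronwall's inequality then yields $\mathbb{E}\sup_{0\le t\le T}|X^{v,1}_t|^2\le K d^2(v,u)$, which is the first assertion.

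Next I would turn to $X^{v,2}$, which solves a linear equation of the same type but whose inhomogeneous terms are now the quadratic expressions in $X^{v,1}$ (such as $b_{xx}(s)X^{v,1}_s\times X^{v,1}_s$ and its mean-field cousins) together with $\bigtriangleup b_x(s,v)X^{v,1}_s+\tilde{\mathbb{E}}[\bigtriangleup b_\mu(s,v)\tilde X^{v,1}_s]$. The second-order derivatives $b_{xx},b_{x\mu},b_{y\mu},b_{\mu\mu}$ (and likewise for $\sigma$) are bounded by Hypothesis \ref{H1}(4), so the quadratic source terms are controlled in $L^2$ by $\mathbb{E}\sup_s|X^{v,1}_s|^4$; to bound this fourth moment I would either run the analogue of Lemma 4.1 at the eighth-power level for $X^{v,1}$ — its source term $\bigtriangleup b(s,v)$ contributing $\mathbb{E}|\int_0^T|\bigtriangleup b(s,v)|ds|^8\le d^7(v,u)\int_{I(u,v)}\mathbb{E}|\bigtriangleup b(s,v)|^8 ds\le K d^8(v,u)$ — to get $\mathbb{E}\sup_t|X^{v,1}_t|^8\le K d^8(v,u)$, hence $\mathbb{E}\sup_t|X^{v,1}_t|^4\le K d^4(v,u)$. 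For the term $\bigtriangleup b_x(s,v)X^{v,1}_s$, which is supported on $I(u,v)$, Cauchy--Schwarz in time gives $\mathbb{E}|\int_0^T \bigtriangleup b_x(s,v)X^{v,1}_s ds|^2\le d(v,u)\int_{I(u,v)}\mathbb{E}|X^{v,1}_s|^2 ds\le K d^2(v,u)\cdot\sup_s\mathbb{E}|X^{v,1}_s|^2\le K d^4(v,u)$ after inserting the first-order bound. Collecting all source contributions at order $d^4(v,u)$ and again invoking BDG plus Gronwall on the linear equation for $X^{v,2}$ gives $\mathbb{E}\sup_{0\le t\le T}|X^{v,2}_t|^2\le K d^4(v,u)$.

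The routine part is the Gronwall/BDG bookkeeping, which is essentially identical in structure to Lemma 4.1; the main obstacle — and the place where care is needed — is the treatment of the mean-field terms, i.e. verifying that $L^2$ (or $L^8$) estimates pass through the operations $\tilde{\mathbb{E}}[\cdot]$, $\bar{\mathbb{E}}\tilde{\mathbb{E}}[\cdot]$ acting on independent copies, which one handles by Fubini together with the boundedness of the derivative kernels, and the careful accounting of the support $I(u,v)$: every occurrence of a ``$\bigtriangleup$'' coefficient carries one factor of $d(v,u)$ from Cauchy--Schwarz over the time interval, and it is the combination of this gain with the already-established order of $X^{v,1}$ that produces the exponents $2$ and $4$. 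One should also double-check that Hypothesis \ref{H1}(3)'s linear-growth bound $C(1+|x|+|v|)$ on the first-order derivatives of $h,\Phi$ is not needed here (only the coefficients $b,\sigma$ enter $X^{v,1},X^{v,2}$), so the uniform $L^8$-integrability of the controls suffices for all moments invoked.
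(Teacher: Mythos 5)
Your proposal is correct and follows essentially the same route as the paper: BDG plus Gronwall on the linear equations \eqref{eq41}--\eqref{eq42}, with each ``$\bigtriangleup$'' coefficient supported on $I(u,v)$ contributing the factors of $d(v,u)$ (you use Cauchy--Schwarz in time where the paper uses Minkowski's integral inequality, which is immaterial). If anything, your explicit fourth/eighth-moment estimate for $X^{v,1}$ to control the quadratic source terms like $b_{xx}(s)X^{v,1}_s\times X^{v,1}_s$ is more careful than the paper's shorthand $[g_1(T)]^2$, which as written only bounds the independent-copy cross terms.
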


\begin{proof}
For any $\tau\in[0,T]$, denote 
\begin{eqnarray*}
g_1(\tau)=\mathbb{E}\sup_{0\leq t \leq \tau}|X^{v,1}_t|^2, \ g_2(\tau)=\mathbb{E}\sup_{0\leq t \leq \tau}|X^{v,2}_t|^2.
\end{eqnarray*}

By Hypothesis 3.1 and the Burkholder-Davis-Gundy inequality, we have
\begin{eqnarray}
g_1(\tau)\leq K\left(\int^\tau_0 g_1(s) ds+\mathbb{E}\left|\int^T_0|\bigtriangleup b(s;v)|ds\right|^2\right),
\end{eqnarray}
and
\begin{eqnarray}
g_2(\tau)&\leq &K\bigg(\int^\tau_0 g_2(s) ds+[g_1(T)]^2\nonumber\\
&&+\mathbb{E}\left|\int^T_0|\bigtriangleup b_x(s;v)|ds\right|^{4}+\mathbb{E}\tilde{\mathbb{E}}\left|\int^T_0|\bigtriangleup b_\mu(s;v)|ds\right|^{4}\bigg).
 \end{eqnarray}
The application of Grownwall's inequality allows to obtain that
\begin{eqnarray}\label{eq410}
 \mathbb{E}\sup_{0\leq t \leq T}|X^{v,1}_t|^2&\leq& K\left(\mathbb{E}\left|\int^T_0|\bigtriangleup b(s;v)|ds\right|^2\right),\\
 \mathbb{E}\sup_{0\leq t \leq T}|X^{v,2}_t|^2&\leq& K\Bigg(\mathbb{E}\bigg|\int^T_0|\bigtriangleup b(s;v)|ds\bigg|^{4}+\mathbb{E}\bigg|\int^T_0 |\bigtriangleup b_x(s;v)|ds\bigg|^{4}\nonumber\\
&&+\mathbb{E}\tilde{\mathbb{E}}\bigg|\int^T_0|\bigtriangleup b_{\mu}(s;v)|ds \bigg|^{4}\Bigg)\label{eq45}
 \end{eqnarray}
\indent Notice that the first-order derivative $b_x$ is bounded. Then, (\ref{eq45}) implies the following estimate
\begin{eqnarray}
 \mathbb{E}\sup_{0\leq t\leq T}|X^{v,2}_t|^2 \leq K\left(\mathbb{E}\left|\int^T_0|\bigtriangleup b(s;v)|ds\right|^{4}+d^{4}(u,v)\right).
 \end{eqnarray}
According to assumption about $v$ and $u$, then,
\begin{eqnarray}
 \mathbb{E}\sup_{0\leq t\leq T}|X^{v,1}_t|^2 \leq Kd^{2}(v, u),\mbox{ and }\ 
  \mathbb{E}\sup_{0\leq t\leq T}|X^{v,2}_t|^2 \leq Kd^{4}(v, u).
 \end{eqnarray}

In fact, by Minkowski's inequality, we have
\begin{eqnarray}
 &&\mathbb{E}\Bigg|\int^T_0|\bigtriangleup b(s;v)|ds\Bigg|^{4}\leq\Bigg|\int^T_0 \bigg(\mathbb{E}|\bigtriangleup b(s;v)|^{4}\bigg)^{\frac{1}{4}}ds\Bigg|^{4}\nonumber \\
 &=&\Bigg|\int_{I(u,v)} \bigg(\mathbb{E}|\bigtriangleup b(s;v)|^{4}\bigg)^{\frac{1}{4}}ds\Bigg|^{4}\le Kd^{4}(u,v). 
 \end{eqnarray}
\end{proof}

 The following lemma gives the order of ${X^{v*}_t}$.
\begin{lemma}\label{lm43}
Assume Hypothesis \ref{H1} holds.
For $v(\cdot)\in \mathcal{U}$ and Borel subset $I_{\rho}\subset [0,T]$ with Lebesgue measure $|I_{\rho}|$, define
 \begin{eqnarray}\label{eq49}
\hat{v}_t=v_t\text{1}_{I_{\rho}}(t)+u_t\text{1}_{[0,T]\setminus I_{\rho}}(t),\ X^{\hat{v}*}_t:=X^{*}(t,\hat{v}).
 \end{eqnarray}
Then we have
 \begin{eqnarray}
&\mathbb{E}\sup \limits_{0\leq t \leq T}|X^{\hat{v}*}_t|^2=o(|I_{\rho}|^{4})&
   \end{eqnarray}
   when  $|I_{\rho}|\rightarrow 0$.
\end{lemma}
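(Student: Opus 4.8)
The plan is to prove the estimate $\mathbb{E}\sup_{0\le t\le T}|X^{\hat v*}_t|^2 = o(|I_\rho|^4)$ by combining the It\^o--moment machinery used in Lemmas 4.1--4.3 with the key observation that all the ``$\bigtriangleup$'' coefficient increments vanish outside $I_\rho$, so that every term in the remainder process $X^{\hat v*}$ is supported on $I_\rho$ and picks up an extra factor of $|I_\rho|$ from H\"older's inequality on the time integral. First I would write out the SDE satisfied by $X^{\hat v*}_t = X^u_t - X^{\hat v,1}_t - X^{\hat v,2}_t - X^{\hat v}_t$ by subtracting the defining equations \eqref{eq41}, \eqref{eq42} from the state equation \eqref{eq210c} for $X^{\hat v}$; expanding $b(s,X^{\hat v}_s,P_{X^{\hat v}_s},\hat v_s)$ and $\sigma$ by second-order Taylor's formula around $(X^u_s,P_{X^u_s},u_s)$, the zeroth-, first-, and carefully chosen second-order terms cancel against $b^1,\sigma^1,b^2,\sigma^2$, leaving a drift and diffusion for $X^{\hat v*}$ that is a sum of (i) linear terms $b_x(s)X^{\hat v*}_s$, $\tilde{\mathbb E}[b_\mu(s)\tilde X^{\hat v*}_s]$ and their $\sigma$-analogues, and (ii) a remainder $R_s$ consisting of Taylor integral remainders and cross terms, all of which either carry an explicit $\bigtriangleup$-increment (hence $\mathbf 1_{I_\rho}(s)$) or are quadratic/higher in the already-controlled processes $X^{\hat v,1}, X^{\hat v,2}$.

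Next I would set $g_*(\tau) = \mathbb{E}\sup_{0\le t\le\tau}|X^{\hat v*}_t|^2$ and run the same Burkholder--Davis--Gundy plus Gronwall argument as in Lemma \ref{lm42}: boundedness of $b_x,\sigma_x,b_\mu,\sigma_\mu$ absorbs the linear terms into $\int_0^\tau g_*(s)\,ds$, so that
\begin{equation}
g_*(\tau) \le K\int_0^\tau g_*(s)\,ds + K\,\mathbb{E}\Big|\int_0^T |R_s|\,ds\Big|^2,
\end{equation}
and Gronwall gives $g_*(T)\le K\,\mathbb{E}\big|\int_0^T|R_s|\,ds\big|^2$. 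It then remains to show the forcing term is $o(|I_\rho|^4)$. The pieces of $R_s$ split into two types. The terms carrying a $\bigtriangleup$-increment are supported on $I_\rho$; using Minkowski's integral inequality exactly as at the end of Lemma \ref{lm42}, $\mathbb{E}\big|\int_{I_\rho}|\bigtriangleup b_x(s;\hat v)|\,|X^{\hat v,1}_s|\,ds\big|^2 \le |I_\rho|\int_{I_\rho}\mathbb{E}\big[|\bigtriangleup b_x(s)|^2|X^{\hat v,1}_s|^2\big]\,ds \le K|I_\rho|^2 \sup_s\mathbb{E}|X^{\hat v,1}_s|^2$, and by Lemma \ref{lm42} applied with the control $\hat v$ (for which $d(\hat v,u)\le|I_\rho|$) one has $\mathbb E|X^{\hat v,1}_s|^2 = o(|I_\rho|^2)$ — more precisely $O(|I_\rho|^2)$ with the localization of the integrand on $I_\rho$ giving the extra gain, so such terms are $O(|I_\rho|^4)$ and in fact $o(|I_\rho|^4)$ once one uses that $\mathbb E\int_{I_\rho}|X^{\hat v,1}_s|^2 ds$ is itself $o(|I_\rho|^3)$ by absolute continuity. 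The genuinely second-order Taylor remainder terms (without a $\bigtriangleup$) are of the form (bounded) $\times$ (difference of second derivatives along the path) $\times$ $X^{\hat v,1}_s\times X^{\hat v,1}_s$ or $X^{\hat v,1}_s \times X^{\hat v,2}_s$; using Lemma \ref{lm42}, $\mathbb E\sup|X^{\hat v,1}|^4 \le (\mathbb E\sup|X^{\hat v,1}|^8)^{1/2}$ which by Lemma 4.1 and H\"older is $O(|I_\rho|^4)$, while the continuity (and boundedness) of the second derivatives together with $X^{\hat v}_s\to X^u_s$ in $L^2$ forces the difference factors to $0$, yielding $o(|I_\rho|^4)$; the cross term $X^{\hat v,1}\times X^{\hat v,2}$ is $O(|I_\rho|^{3})$ by Cauchy--Schwarz on the bounds $Kd^2$ and $Kd^4$, hence $o(|I_\rho|^4)$ is not immediate there and one must instead note this cross term also appears with a second-derivative difference factor tending to $0$, or absorb it via a sharper Gronwall splitting.

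The main obstacle I expect is exactly this bookkeeping: making sure that after the cancellations against $b^1,\sigma^1,b^2,\sigma^2$ every surviving term in $R_s$ genuinely carries either an indicator $\mathbf 1_{I_\rho}$ or a ``vanishing'' factor (a second-derivative increment along $X^{\hat v}_s - X^u_s$, which $\to 0$ in the appropriate sense as $|I_\rho|\to 0$), so that one can upgrade the crude $O(|I_\rho|^4)$ bounds to $o(|I_\rho|^4)$. This is where Hypothesis \ref{H1}(4) — continuity and boundedness of the second-order derivatives — is essential: boundedness gives the $O(|I_\rho|^4)$ a priori bound through the moment estimates of Lemma \ref{lm42} and Lemma 4.1, and continuity (via dominated convergence, using that $\mathbb E\sup_t|X^{\hat v}_t - X^u_t|^2 \to 0$) converts it to the little-$o$. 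I would handle the $X^{\hat v,1}\times X^{\hat v,2}$ and $X^{\hat v,2}$-linear contributions last, since they are the ones where the naive power counting gives only $|I_\rho|^{3}$ or $|I_\rho|^{4}$ and one must lean on the vanishing-increment mechanism rather than pure homogeneity. Once all forcing terms are shown to be $o(|I_\rho|^4)$, Gronwall closes the argument and gives the claim.
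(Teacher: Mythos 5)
Your proposal follows essentially the same route as the paper: write the SDE for $X^{\hat v*}$, perform the second-order Taylor expansion so that the zeroth/first/second-order terms cancel against $b^1,\sigma^1,b^2,\sigma^2$, apply Burkholder--Davis--Gundy and Gronwall to reduce everything to the forcing terms $\mathbb{E}\int_0^T|A|^2\,ds$ and $\mathbb{E}\int_0^T|B|^2\,ds$, and then use boundedness of the second derivatives for the $O(|I_\rho|^4)$ bound and their continuity (via $X^{\hat v,12}\to 0$, so $\bigtriangleup b_{xx}(s;\lambda\eta;\hat v)\to 0$, etc.) to upgrade to $o(|I_\rho|^4)$. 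This is exactly the paper's argument.

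The one place where your bookkeeping goes astray is the cross term $b_{xx}(t)\,X^{\hat v,1}_t\times X^{\hat v,2}_t$: you count it as $O(|I_\rho|^3)$ and then look for a vanishing second-derivative factor to rescue it, but that term carries the plain (bounded, non-vanishing) $b_{xx}(t)$, so no such factor is available. The rescue is unnecessary, however, because what enters the Gronwall forcing is the \emph{square} of this term, i.e.\ $\mathbb{E}\bigl[\sup_t|X^{\hat v,1}_t|^2\,\sup_t|X^{\hat v,2}_t|^2\bigr]$, which by Cauchy--Schwarz with the fourth-moment analogues of Lemma \ref{lm42} (available thanks to the eighth-moment assumption on controls) is $O\bigl(|I_\rho|^2\cdot|I_\rho|^4\bigr)=O(|I_\rho|^6)=o(|I_\rho|^4)$; your $|I_\rho|^3$ count is the bound on the first moment of the product, not on the squared quantity that actually appears. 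Similarly, the $\bigtriangleup b_x$-terms surviving in $A$ multiply $X^{\hat v,2}$ (not $X^{\hat v,1}$, which cancels into the definition of $X^{\hat v,2}$), and are $O(|I_\rho|\cdot|I_\rho|^4)=o(|I_\rho|^4)$ by the support argument you describe. With these two corrections your argument closes exactly as the paper's does.
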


\begin{proof}

We introduce the following notations first
$$\bigtriangleup b_{xx}(s;\lambda\eta;v):=b_{xx}(s,X^u_s+\lambda \eta X^{v,12}_s, P_{X^u_s+\lambda \eta X^{v,12}_s},v_s)-b_{xx}(s)$$
$$\bigtriangleup b_{\mu\mu}(s;\lambda\eta;v):=b_{\mu\mu}(s,X^u_s, P_{X^u_s+\lambda \eta X^{v,12}_s},\tilde{X}^u_s+\lambda \eta \tilde{X}^{v,12}_s,\bar{X}^u_s+\lambda \eta  \bar{X}^{v,12}_s,v_s)-b_{xx}(s),$$
$$\bigtriangleup b_{\mu y}(s;\lambda\eta;v)=b_{\mu y}(s,X^u_s ,P_{X^u_s+\lambda \eta X^{v,12}_s},\tilde{X}^u_s+\lambda \eta  \tilde{X}^{v,12}_s,v_s)-b_{\mu y}(s)$$
$$\bigtriangleup b_{x\mu}(s;\lambda;v):=b_{x\mu}(s,X^u_s, P_{X^u_s+\lambda X^{v,12}_s},\tilde{X}^u_s+\lambda   \tilde{X}^{v,12}_s,v_s)-b_{x\mu}(s),$$
where $ X^{v,12}_{\cdot}:=X^{v,1}_{\cdot}+X^{v,2}_{\cdot}.$
Similarly notations can be introduced with $b$ replaced by $\sigma.$\\
\indent We now proceed to estimating $X^*_{.}(\hat{v})$ defined by (\ref{eq43}). By (\ref{eq210c}), (\ref{eq41}) and (\ref{eq42}), we have
\begin{eqnarray}
dX^{\hat{v}*}_t=\alpha(t)dt+\beta(t)dB_t,
\end{eqnarray}
where
$$\alpha(t)=b(t,X^{\hat{v}},P_{X^{\hat{v}}},\hat{v}_t)-\big[b(t) +b^1(t,\hat{v})+b^2(t,\hat{v})\big],$$
 $$\beta(t)=\sigma(t,X^{\hat{v}}_t,P_{X^{\hat{v}}_t})-\big[\sigma(t) +\sigma^1(t,{\hat{v}})+\sigma^2(t,{\hat{v}})\big].$$
\indent We can represent $\alpha(t)$ as follows. 
\begin{eqnarray}\label{eq44}
\alpha(t)&=&b(t,X^{\hat{v}}_t,P_{X^{\hat{v}}_t},{\hat{v}_t})-\bigg[b(t) +\Big\{ b_x(t)X^{{\hat{v}},1}_t+\tilde{\mathbb{E}}\left[b_{\mu}(t) \tilde{X}^{{\hat{v}},1}_t\right]+\bigtriangleup b(t,{\hat{v}})\Big \}\nonumber\\
&&+\Big\{ b_x(t)X^{{\hat{v}},2}_t+\tilde{\mathbb{E}}\left[b_{\mu}(t) \tilde{X}^{{\hat{v}},2}_t\right]+\bigtriangleup b_x(t,{\hat{v}})X^{{\hat{v}},1}_t+\tilde{\mathbb{E}}\left[\bigtriangleup b_{\mu}(t,{\hat{v}})\tilde{X}^{{\hat{v}},1}_t\right] \nonumber\\
 &&+\frac{1}{2} b_{xx}(t)X^{{\hat{v}},1}_t\times X^{{\hat{v}},1}_t+ \tilde{\mathbb{E}}\left[ b_{x \mu}(t)X^{{\hat{v}},1}_t\times \tilde{X}^{{\hat{v}},1}_t\right]\nonumber\\
 &&+\frac{1}{2} \tilde{\mathbb{E}}\left[ b_{y \mu}(t)\tilde{X}^{{\hat{v}},1}_t\times \tilde{X}^{{\hat{v}},1}_t\right]+\frac{1}{2}\bar{\mathbb{E}} \tilde{\mathbb{E}}\left[ b_{\mu \mu}(t)\tilde{X}^{{\hat{v}},1}_t\times \bar{X}^{{\hat{v}},1}_t\right] \Big\}\bigg].\nonumber\\
\end{eqnarray}
   Denote
\begin{eqnarray}\label{eq46}
A(t;{\hat{v}})&=&\frac{1}{2} b_{xx}(t)(X^{{\hat{v}},2}_t\times X^{{\hat{v}},2}_t+2X^{{\hat{v}},1}_t \times X^{{\hat{v}},2}_t)\nonumber\\
&&+\frac{1}{2}\tilde{\mathbb{E}}\left[ b_{\mu y}(t)(\tilde{X}^{{\hat{v}},2}_t\times \tilde{X}^{{\hat{v}},2}_t+2\tilde{X}^{{\hat{v}},1}_t \times \tilde{X}^{{\hat{v}},2}_t)\right]\nonumber\\
&&+\frac{1}{2}\tilde{\mathbb{E}}\bar{\mathbb{E}}\left[ b_{\mu \mu}(t)(\tilde{X}^{{\hat{v}},2}_t\times \bar{X}^{{\hat{v}},2}_t+\tilde{X}^{{\hat{v}},1}_t \times \bar{X}^{{\hat{v}},2}_t+\bar{X}^{{\hat{v}},1}_t \times \tilde{X}^{{\hat{v}},2}_t)\right]\nonumber\\
&&+\bigtriangleup b_x (t;{\hat{v}})X^{{\hat{v}},2}_t+\tilde{\mathbb{E}}\left[\bigtriangleup b_\mu (t;{\hat{v}})\tilde{X}^{{\hat{v}},2}_t\right]\nonumber\\
&&+\tilde{\mathbb{E}}\left[b_{x\mu}(t)(X^{{\hat{v}},2}_t\times \tilde{X}^{{\hat{v}},2}_t+X^{{\hat{v}},1}_t \times \tilde{X}^{{\hat{v}},2}_t+\tilde{X}^{{\hat{v}},1}_t \times X^{{\hat{v}},2}_t)\right]\nonumber\\
  &&+\int^1_0 \int^1_0 \lambda \left[\bigtriangleup b_{xx}(t;\lambda\eta;{\hat{v}})d \lambda d\eta X^{{\hat{v}},12}_t \times X^{{\hat{v}},12}_t \right]\nonumber\\
 &&+\int^1_0 \int^1_0\bar{\mathbb{E}}\tilde{\mathbb{E}} \left[\lambda (\bigtriangleup b_{\mu\mu}(t;\lambda\eta;{\hat{v}}) d \lambda d\eta  \tilde{X}^{{\hat{v}},12}_t \times \bar{X}^{{\hat{v}},12}_t\right]\nonumber\\
 &&+\int^1_0 \int^1_0\tilde{\mathbb{E}} \left[\lambda (\bigtriangleup b_{\mu y}(t;\lambda\eta;{\hat{v}})d \lambda d\eta  \tilde{X}^{{\hat{v}},12}_t \times \tilde{X}^{{\hat{v}},12}_t\right]\nonumber\\
 &&+\tilde{\mathbb{E}}\left[\int^1_0 \bigtriangleup b_{x\mu}(t;\lambda;{\hat{v}})d\lambda X^{{\hat{v}},12}_t\times \tilde{X}^{{\hat{v}},12}_t\right].
    \end{eqnarray}
It is easy to show that
\begin{eqnarray*}
\alpha(t)&=& b(t,X^{\hat{v}}_t,P_{X^{\hat{v}}_t},{\hat{v}}_t)-\Big[b(t,X^u_t,P_{X^u_t+X^{{\hat{v}},12}_t},{\hat{v}}_t)\nonumber\\
&&+ b_x(t,X^u_t,P_{X^u_t+X^{{\hat{v}},12}_t},{\hat{v}}_t)X^{{\hat{v}},12}_t \nonumber\\
 &&+\int^1_0 \int^1_0 \lambda b_{xx}(t,X^u_t+\lambda \eta X^{{\hat{v}},12}_t,P_{X^u_t+X^{{\hat{v}},12}_t},{\hat{v}}_t)d \lambda d\eta X^{{\hat{v}},12}_t\times X^{{\hat{v}},12}_t\Big]\nonumber\\
 &&+A(t;{\hat{v}})\\
&=& b(t,X^{\hat{v}}_t,P_{X^{\hat{v}}_t},{\hat{v}}_t)-b(t,X^u_t+X^{{\hat{v}},12}_t,P_{X^u_t+X^{{\hat{v}},12}_t},{\hat{v}}_t)+A(t;{\hat{v}}).
\end{eqnarray*}
 Simularly, by setting 
    \begin{eqnarray}\label{eq47}
B(t;{\hat{v}})&:=&\frac{1}{2} \sigma_{xx}(t)(X^{{\hat{v}},2}_t\times X^{{\hat{v}},2}_t+2X^{{\hat{v}},1}_t \times X^{{\hat{v}},2}_t)\nonumber\\
&&+\frac{1}{2}\tilde{\mathbb{E}}\left[ \sigma_{\mu y}(t)(\tilde{X}^{{\hat{v}},2}_t\times \tilde{X}^{{\hat{v}},2}_t+2\tilde{X}^{{\hat{v}},1}_t \times \tilde{X}^{{\hat{v}},2}_t)\right]\nonumber\\
&&+\frac{1}{2}\tilde{\mathbb{E}}\bar{\mathbb{E}}\left[ \sigma_{\mu \mu}(t)(\tilde{X}^{{\hat{v}},2}_t\times \bar{X}^{{\hat{v}},2}_t+\tilde{X}^{{\hat{v}},1}_t\times \bar{X}^{{\hat{v}},2}_t+\bar{X}^{{\hat{v}},1}_t \times \tilde{X}^{{\hat{v}},2}_t)\right]\nonumber\\
&&+\tilde{\mathbb{E}}\left[\sigma_{x\mu}(t)(X^{{\hat{v}},2}_t\times \tilde{X}^{{\hat{v}},2}_t+X^{{\hat{v}},1}_t \times \tilde{X}^{{\hat{v}},2}_t+\tilde{X}^{{\hat{v}},1}_t \times X^{{\hat{v}},2}_t)\right]\nonumber\\
  &&+\int^1_0 \int^1_0 \lambda \left[\bigtriangleup \sigma_{xx}(t;\lambda\eta;{\hat{v}})d \lambda d\eta X^{{\hat{v}},12}_t \times X^{{\hat{v}},12}_t \right]\nonumber\\
 &&+\int^1_0 \int^1_0\bar{\mathbb{E}}\tilde{\mathbb{E}} \left[\lambda \bigtriangleup \sigma_{\mu\mu}(t;\lambda\eta;{\hat{v}}) d \lambda d\eta  \tilde{X}^{{\hat{v}},12}_t \times  \bar{X}^{{\hat{v}},12}_t\right]\nonumber\\
 &&+\int^1_0 \int^1_0\tilde{\mathbb{E}} \left[\lambda \bigtriangleup \sigma_{\mu y}(t;\lambda\eta;{\hat{v}})d \lambda d\eta \tilde{X}^{{\hat{v}},12}_t \times \tilde{X}^{{\hat{v}},12}_t\right]\nonumber\\
 &&+\tilde{\mathbb{E}}\left[\int^1_0 (\bigtriangleup \sigma_{x\mu}(t;\lambda;{\hat{v}})d\lambda X^{{\hat{v}},12}_t\times \tilde{X}^{{\hat{v}},12}_t)d\lambda X^{{\hat{v}},12}_t\times \tilde{X}^{{\hat{v}},12}_t\right],
      \end{eqnarray}
  we have 
  \begin{eqnarray*}
 \beta(t)&=& \sigma(t,X^{\hat{v}}_t,P_{X^{\hat{v}}_t})-\Big[\sigma(t,X^u_t,P_{X^u_t+X^{{\hat{v}},12}_t})+ \sigma_x(t,X^u_t,P_{X^u_t+X^{{\hat{v}},12}_t})X^{{\hat{v}},12}_t \nonumber\\
 &&+\int^1_0 \int^1_0 \lambda \sigma_{xx}(t,X^u_t+\lambda \eta X^{{\hat{v}},12}_t,P_{X^u_t+X^{{\hat{v}},12}_t})d \lambda d\eta X^{{\hat{v}},12}_t\times X^{{\hat{v}},12}_t\Big]\nonumber\\
 &&+B(t;{\hat{v}})\\
&=& \sigma(t,X^{\hat{v}}_t,P_{X^{\hat{v}}_t})-\sigma(t,X^u_t+X^{{\hat{v}},12}_t,P_{X^u_t+X^{{\hat{v}},12}_t})+B(t;{\hat{v}}).
  \end{eqnarray*}
\indent  According to Hypothesis \ref{H1}, we have
\begin{eqnarray*}
&&  \left|b(t,X^{\hat{v}}_t,P_{X^{\hat{v}}_t},{\hat{v}}_t)-b(t,X^u_t+X^{{\hat{v}},12}_t,P_{X^u_t+X^{{\hat{v}},12}_t},{\hat{v}}_t)\right|\\
  &\leq& K\left(|X^{{\hat{v}}*}_t|+W_2(P_{X^{\hat{v}}_t},P_{X^u_t+X^{{\hat{v}},12}_t})\right).
  \end{eqnarray*}  
 Note that
 \begin{eqnarray*}
 W_2(P_{X^{\hat{v}}_s}, P_{X^u_s+ X^{{\hat{v}},12}_s})^2\leq&\mathbb{E}\left|X^{\hat{v}}_s-X^u_s-X^{{\hat{v}},12}_s\right|^2=\mathbb{E}\left|X^{{\hat{v}}*}_s\right|^2.
  \end{eqnarray*}
   By  Burkholder-Davis-Gundy inequality, for $\tau\in[0,T]$, we obtain the following estimation
\begin{eqnarray}
\mathbb{E}\sup_{0 \leq t \leq \tau}|X^{{\hat{v}}*}_t|^2 &\leq & \int ^\tau_0 K \mathbb{E} \sup_{0\leq r \leq  s} |X^{{\hat{v}}*}_r|^2 ds+\mathbb{E}\int^{T}_0 |A(s;{\hat{v}})|^2ds \nonumber\\
&&+\mathbb{E}\int^{T}_0 |B(s;{\hat{v}})|^2ds.
 \end{eqnarray}
 According to Gronwall's inequality, we have
 \begin{eqnarray}\label{eq48}
\mathbb{E}\sup_{0 \leq t \leq T}|X^{{\hat{v}}*}_t|^2 &\leq & K \bigg(\mathbb{E}\int^{T}_0 |A(s;{\hat{v}})|^2ds
+\mathbb{E}\int^{T}_0 |B(s;{\hat{v}})|^2ds\bigg).
 \end{eqnarray}
About $A(s;{\hat{v}})$ we have 
   \begin{eqnarray}
&& \mathbb{E}\int^T_0|A(s;{\hat{v}})|^{2}ds\nonumber\\
 &\leq &K \mathbb{E}\bigg( \sup_{0\leq s \leq t}|X^{{\hat{v}},2}_s|^{4}+\sup_{0\leq s \leq t}|X^{{\hat{v}},1}_s|^2\sup_{0\leq s \leq t}|X^{{\hat{v}},2}_s|^2\nonumber\\
 &&+\sup_{0\leq s \leq t}|X^{{\hat{v}},2}_s|^2\int^T_0\left[\left|\bigtriangleup b_x(s;{\hat{v}})\right|^2
+\left|\bigtriangleup b_\mu(s;{\hat{v}})\right|^2\right]ds\nonumber\\
 &&+\sup_{0\leq s \leq t}|X^{{\hat{v}},1}_s|^{4}\int^T_0\int^1_0 \int^1_0 |\lambda \bigtriangleup b_{xx}(s;\lambda\eta;{\hat{v}})|^{2} d \lambda d\eta ds \nonumber\\
 &&+\sup_{0\leq s \leq t}|X^{{\hat{v}},1}_s|^{4}\int^T_0\int^1_0 \int^1_0\bar{\mathbb{E}}\tilde{\mathbb{E}} \left|\lambda \bigtriangleup b_{\mu\mu}(s;\lambda\eta;{\hat{v}})\right|^2\cdot d \lambda d\eta ds \nonumber\\
 &&+\sup_{0\leq s \leq t}|X^{{\hat{v}},1}_s|^{4}\int^T_0\int^1_0 \int^1_0\tilde{\mathbb{E}} \left|\lambda \bigtriangleup b_{\mu y}(s;\lambda\eta;{\hat{v}})\right|^2d \lambda d\eta ds\nonumber\\
 &&+\sup_{0\leq s \leq t}|X^{{\hat{v}},1}_s|^{4}\tilde{\mathbb{E}}\int^T_0\int^1_0 \left|\bigtriangleup b_{x\mu}(s;\lambda;{\hat{v}})\right|^2d\lambda ds\bigg).
  \end{eqnarray}
  Note that
  $$\mathbb{E}\left|\int^T_0|\bigtriangleup b(s;{\hat{v}})|ds\right|^{8}\leq K|I_{\rho}|^8,$$
  similar estimates hold with $b$ replaced by $b_{x}$ and $b_{\mu}$. Since
  $X^{{\hat{v}},12}_t\rightarrow 0$ as $|I_{\rho}|\rightarrow 0$, so we also have
  $$ \bigtriangleup b_{xx}(s;\lambda\eta;{\hat{v}})\rightarrow 0, $$
  replace $b_{xx}$ by $b_{\mu\mu}$, $b_{\mu y}$ and $b_{x\mu}$, we can get the similar result when $|I_{\rho}|\rightarrow 0$.\\ 
  \indent According to estimation of $X^{{\hat{v}},1}_{\cdot}, \ X^{{\hat{v}},2}_{\cdot}$ in Lemma \ref{lm42}, we obtain
    \begin{eqnarray}
&&\mathbb{E} \int^T_0 |A(s;{\hat{v}})|^2ds~~~~~~~~~~~~~~~~~~~~~~~~~~~~~~~~~~~~~~~~~~~~~~~~~~~~~~~~~~~~~~~~~~~~~~~~~~~\nonumber\\
& \leq &K\bigg(\sqrt{\mathbb{E}\left|\int^T_0|\bigtriangleup b(s;{\hat{v}})|ds\right|^{8}}+\sqrt{\mathbb{E}\left|\int^T_0|\bigtriangleup b_x(s;{\hat{v}})|ds\right|^{8}}\nonumber\\
 &&+\sqrt{\mathbb{E}\tilde{\mathbb{E}}\left|\int^T_0|\bigtriangleup b_\mu(s;{\hat{v}})|ds\right|^{8}}\bigg)\times\bigg(\sqrt{\mathbb{E}|\int^T_0|\bigtriangleup b(s;{\hat{v}})|ds|^{8}}\nonumber\\
 &&+\sqrt{\mathbb{E}\left|\int^T_0|\bigtriangleup b_x(s;{\hat{v}})|ds\right|^{8}}+\sqrt{\mathbb{E}\tilde{\mathbb{E}}\left|\int^T_0|\bigtriangleup b_\mu(s;{\hat{v}})|ds\right|^{8}}\nonumber\\
 &&+ \sqrt{\mathbb{E}\int^T_0\left|\bigtriangleup b(s;{\hat{v}})\right|^{8}ds}+\sqrt{\mathbb{E}\int^T_0\left|\bigtriangleup b_x(s;{\hat{v}})\right|^{4}ds}\nonumber\\
 &&+\sqrt{\mathbb{E}\tilde{\mathbb{E}}\int^T_0\left|\bigtriangleup b_\mu(s;{\hat{v}})\right|^{4}ds}+\sqrt{\mathbb{E}\int^T_0 \int^1_0\int^1_0\left| \lambda \bigtriangleup b_{xx}(s;\lambda\eta;{\hat{v}})\right|^{4}d \lambda d\eta ds}\nonumber\\
  &&+\sqrt{\mathbb{E}\tilde{\mathbb{E}}\bar{\mathbb{E}}\int^T_0 \int^1_0\int^1_0\left| \lambda \bigtriangleup b_{\mu\mu}(s;\lambda\eta;{\hat{v}})\right|^{4}d \lambda d\eta ds}\nonumber\\
  &&+\sqrt{\mathbb{E}\tilde{\mathbb{E}}\bar{\mathbb{E}}\int^T_0 \int^1_0\int^1_0 \left|\lambda \bigtriangleup b_{\mu y}(s;\lambda\eta;{\hat{v}})\right|^{4}d \lambda d\eta ds}\nonumber\\
    &&+\sqrt{\mathbb{E}\tilde{\mathbb{E}}\int^T_0 \int^1_0 \left|\lambda \bigtriangleup b_{x\mu}(s;\lambda;{\hat{v}})\right|^{4}d \lambda ds} \bigg)=o\left(|I_{\rho}|^4\right),\  \mbox{as}\ |I_{\rho}|\rightarrow 0.
      \end{eqnarray}
      Similarly
   \begin{eqnarray}
&&\mathbb{E} \int^T_0 |B(s;{\hat{v}})|^2ds=o\left(|I_{\rho}|^4\right),\  \mbox{as}\ |I_{\rho}|\rightarrow 0.
    \end{eqnarray}
    
 \indent Finally, by (\ref{eq48})
  we have the desire result.
\end{proof}

\section{Expansion of the cost functional with respect to control variable}
\indent In this section, we use the method of Lemma \ref{lm43} again to study the expansion of the cost functional according to different order of the purtubation.
\begin{lemma}\label{lm51}
Assume that Hypothesis \ref{H1} holds. Define
 \begin{eqnarray}
{J^{*}}(v_{\cdot})&=&J(v_{\cdot})-J(u_{\cdot})-\mathbb{E}\int^t_0 \bigg\{ h_x(s)X^{v,1}_s+\tilde{\mathbb{E}}\left[h_{\mu}(s) \tilde{X}^{v,1}_s\right]+\bigtriangleup h(s,v)\nonumber\\
  &&+h_x(s)X^{v,2}_s+\tilde{\mathbb{E}}\left[h_{\mu}(s) \tilde{X}^{v,2}_s\right]+\bigtriangleup h_x(s,v)X^{v,1}_s+\tilde{\mathbb{E}}\left[\bigtriangleup h_{\mu}(s,v)\tilde{X}^{v,1}_s\right] \nonumber\\
 &&+\frac{1}{2} h_{xx}(s)X^{v,1}_s\times X^{v,1}_s+ \tilde{\mathbb{E}}\left[ h_{x \mu}(s)X^{v,1}_s\times \tilde{X}^{v,1}_s\right]\nonumber\\
 &&+\frac{1}{2} \tilde{\mathbb{E}}\left[ h_{y \mu}(s)\tilde{X}^{v,1}_s\times \tilde{X}^{v,1}_s\right]+\frac{1}{2}\bar{\mathbb{E}} \tilde{\mathbb{E}}\left[ h_{\mu \mu}(s)\tilde{X}^{v,1}_s\times \bar{X}^{v,1}_s\right] \bigg\}ds.
 \end{eqnarray}
 Recall that $\hat{v}$ is defined by (\ref{eq49}) by $I_\rho$. Then,  when  $|I_{\rho}|\rightarrow 0$, we have
  \begin{eqnarray}
&J^*({{\hat{v}}}_{\cdot})=o\left(|I_{\rho}|^{2}\right).&
   \end{eqnarray} 
   
\end{lemma}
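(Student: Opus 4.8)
The plan is to expand the cost functional $J(\hat v)$ along the same lines as the state expansion in Lemma \ref{lm43}, peeling off the zeroth, first and second order terms in the perturbation and showing that the remainder is $o(|I_\rho|^2)$. Recall $J(\hat v)=\mathbb{E}\big[\int_0^T h(t,X^{\hat v}_t,P_{X^{\hat v}_t},\hat v_t)\,dt\big]$ since $\Phi\equiv 0$. First I would write $X^{\hat v}_t = X^u_t + X^{\hat v,12}_t + X^{\hat v*}_t$, where $X^{\hat v,12}:=X^{\hat v,1}+X^{\hat v,2}$, and perform a second-order Taylor expansion of $h$ in the $(x,\mu)$ variables around $(X^u_t,P_{X^u_t})$ with increment $X^{\hat v,12}_t$, handling the measure argument by the rule $f(P_{X^u+Z})-f(P_{X^u})=\tilde{\mathbb{E}}[\partial_\mu f(\cdots)\tilde Z]+\tfrac12\tilde{\mathbb{E}}[\partial_y\partial_\mu f(\cdots)\tilde Z\times\tilde Z]+\tfrac12\bar{\mathbb{E}}\tilde{\mathbb{E}}[\partial^2_\mu f(\cdots)\tilde Z\times\bar Z]+\cdots$, exactly as done for $b$ and $\sigma$ in the proof of Lemma \ref{lm43}. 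This produces the integrand of $J^*$ plus a collection of correction terms of two types: (a) "pure" second-order terms in $X^{\hat v,2}$ or mixed $X^{\hat v,1}\times X^{\hat v,2}$ products against bounded (or suitably integrable) derivatives, and (b) terms involving the Taylor remainders $\triangle h_{xx}(s;\lambda\eta;\hat v)$, $\triangle h_{\mu\mu}$, $\triangle h_{\mu y}$, $\triangle h_{x\mu}$ — exactly analogous to the terms collected in $A(t;\hat v)$ in (\ref{eq46}) — plus the contribution of $X^{\hat v*}_t$ itself, which by Hypothesis \ref{H1}(3) enters through a factor bounded by $C(1+|X|+|v|)$.

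Second, I would estimate each of these correction terms in $L^1(\Omega\times[0,T])$. For type (a) terms I would use Cauchy--Schwarz together with the moment bounds of Lemma \ref{lm42}: $\mathbb{E}\sup_t|X^{\hat v,1}_t|^2\le K|I_\rho|^2$, $\mathbb{E}\sup_t|X^{\hat v,2}_t|^2\le K|I_\rho|^4$ (and I'd also want the corresponding $L^4$ or $L^8$ bounds, obtained the same way using the $L^8$-integrability of controls built into $\mathcal{U}$), so that e.g. $\mathbb{E}\int_0^T|X^{\hat v,2}_s|\,ds$ is already $o(|I_\rho|^2)$ and a product such as $X^{\hat v,1}\times X^{\hat v,2}$ is $O(|I_\rho|^3)$. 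The linear-growth bound on $h_x,h_\mu$ is absorbed using $\sup_t\mathbb{E}|X^u_t|^8<\infty$ and $\sup_t\mathbb{E}|\hat v_t|^8<\infty$ via Hölder. For type (b) terms, the key point — mirroring the end of the proof of Lemma \ref{lm43} — is that $X^{\hat v,12}_t\to0$ in $L^p$ as $|I_\rho|\to0$, hence by continuity and boundedness of the second derivatives the increments $\triangle h_{xx}(s;\lambda\eta;\hat v)$, etc., tend to $0$ and, by dominated convergence, so do their relevant $L^p$ norms; multiplying the $o(1)$ factor against the $O(|I_\rho|^2)$ factor $\sup_t|X^{\hat v,1}_t|^2$ gives $o(|I_\rho|^2)$. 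The contribution of $X^{\hat v*}$ is controlled by $\big(\mathbb{E}\sup_t|X^{\hat v*}_t|^2\big)^{1/2}$ times a bounded quantity, which is $o(|I_\rho|^2)$ directly from Lemma \ref{lm43}.

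Finally I would assemble these bounds: $|J^*(\hat v)|$ is dominated by a finite sum of terms each shown to be $o(|I_\rho|^2)$, giving the claim. The main obstacle I anticipate is the bookkeeping of the terminal-type and linear-growth terms: unlike $b$ and $\sigma$, the running cost derivatives $h_x,h_\mu$ are only bounded by $C(1+|x|+|v|)$ rather than globally bounded, so every Cauchy--Schwarz split must be arranged so that the unbounded factor lands against a high enough moment of $X^u$ and $\hat v$ (here the $L^8$ integrability of $\mathcal{U}$ and of $\sup_t\mathbb{E}|X^v_t|^8$ from Lemma 4.1 is exactly what makes the estimates close), while the small factor $X^{\hat v,1}$ or $X^{\hat v,2}$ still carries enough powers of $|I_\rho|$. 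A secondary technical point is verifying that the various $\triangle h_{\cdot\cdot}(s;\lambda\eta;\hat v)$ remainders genuinely go to zero in the needed norms uniformly in $\lambda,\eta$, which follows from continuity and boundedness of the second-order derivatives (Hypothesis \ref{H1}(4)) together with dominated convergence — the same argument already invoked for $b,\sigma$ in Lemma \ref{lm43}, so it can be cited rather than repeated.
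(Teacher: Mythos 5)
Your proposal is correct and follows essentially the same route as the paper: the paper's proof simply decomposes the running cost $Y^{v}_t=\int_0^t h\,ds$ into $Y^u+Y^{v,1}+Y^{v,2}+Y^{v*}$ and invokes ``the same method as in Lemma \ref{lm43}'', which is exactly the Taylor expansion of $h$ around $(X^u,P_{X^u})$ with increment $X^{\hat v,12}$, the collection of an $A(t;\hat v)$-type remainder, and the estimates from Lemmas \ref{lm42} and \ref{lm43} that you spell out. Your additional attention to the linear-growth (rather than bounded) derivatives of $h$, handled via H\"older against the $L^8$ moments of the state and control, is a detail the paper glosses over but which your argument correctly closes.
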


\begin{proof}
Denote
 \begin{eqnarray}
  Y^v_t=\int^t_0 h(s,X^v_s,P_{X^v_s},v_s)ds.
   \end{eqnarray}
 By (\ref{eq43}) and Lemma 4.3, we have
  \begin{eqnarray}
  Y^v_t=Y^u_t+Y^{v,1}_t+Y^{v,2}_t+Y^{v*}_t,
    \end{eqnarray}
  where
   \begin{eqnarray}
 Y^v_t&=&\int^t_0 h(s,X^v_s,P_{X^v_s},v_s)ds,\\
 Y^{v,1}_t&=&\int^t_0 \bigg\{ h_x(s)X^{v,1}_s+\tilde{\mathbb{E}}\left[h_{\mu}(s) \tilde{X}^{v,1}_s\right]+\bigtriangleup h(s,v)\bigg\} ds,\\
  Y^{v,2}_t&= &\int^t_0\bigg \{ h_x(s)X^{v,2}_s+\tilde{\mathbb{E}}\left[h_{\mu}(s) \tilde{X}^{v,2}_s\right]+\bigtriangleup h_x(s,v)X^{v,1}_s+\tilde{\mathbb{E}}[\bigtriangleup h_{\mu}(s,v)\tilde{X}^{v,1}_s] \nonumber\\
 &&+\frac{1}{2} h_{xx}(s)X^{v,1}_s\times X^{v,1}_s+ \tilde{\mathbb{E}}\left[ h_{x \mu}(s)X^{v,1}_s\times \tilde{X}^{v,1}_s\right]\nonumber\\
 &&+\frac{1}{2} \tilde{\mathbb{E}}\left[ h_{y \mu}(s)\tilde{X}^{v,1}_s\times \tilde{X}^{v,1}_s]+\frac{1}{2}\bar{\mathbb{E}} \tilde{\mathbb{E}}[ h_{\mu \mu}(s)\tilde{X}^{v,1}_s\times \bar{X}^{v,1}_s\right] \bigg\}ds.
      \end{eqnarray}
 Then,
 \begin{eqnarray}
  J(v_{\cdot})-J(u_{\cdot})=\mathbb{E}Y^{v}_T-\mathbb{E}Y^{u}_T,
   \end{eqnarray}
   and hence,
   \begin{eqnarray}
 J^*(\hat{v}_{\cdot})=\mathbb{E}Y^{v}_T-\mathbb{E}Y^{u}_T-\mathbb{E}Y^{v,1}_T-\mathbb{E}Y^{v,2}_T.
   \end{eqnarray}
Using the same method in Lemma \ref{lm43}, we complete the proof.
\end{proof}

\indent Now, we proceed to deriving the expansion of the perturbed cost function.
\begin{eqnarray}
 X^{v,12}_t&=&\int^t_0 \bigg\{ b_x(s)X^{v,1}_s+\tilde{\mathbb{E}}\left[b_{\mu}(s) \tilde{X}^{v,1}_s\right]+\bigtriangleup b(s,v)\nonumber\\
  &&+b_x(s)X^{v,2}_s+\tilde{\mathbb{E}}\left[b_{\mu}(s) \tilde{X}^{v,2}_s\right]+\bigtriangleup b_x(s,v)X^{v,1}_s+\tilde{\mathbb{E}}\left[\bigtriangleup b_{\mu}(s,v)\tilde{X}^{v,1}_s\right] \nonumber\\
 &&+\frac{1}{2} b_{xx}(s)X^{v,1}_s\times X^{v,1}_s+ \tilde{\mathbb{E}}\left[ b_{x \mu}(s)X^{v,1}_s\times \tilde{X}^{v,1}_s\right]\nonumber\\
 &&+\frac{1}{2} \tilde{\mathbb{E}}\left[ b_{y \mu}(s)\tilde{X}^{v,1}_s\times \tilde{X}^{v,1}_s\right]+\frac{1}{2}\bar{\mathbb{E}} \tilde{\mathbb{E}}\left[ b_{\mu \mu}(s)\tilde{X}^{v,1}_s\times \bar{X}^{v,1}_s\right] \bigg\}ds\nonumber\\
  &&+\int^t_0 \bigg\{ \sigma_x(s)X^{v,1}_s+\tilde{\mathbb{E}}\left[\sigma_{\mu}(s) \tilde{X}^{v,1}_s\right] + \sigma_x(s)X^{v,2}_s+\tilde{\mathbb{E}}\left[\sigma_{\mu}(s) \tilde{X}^{v,2}_s\right]\nonumber\\
 &&+\frac{1}{2} \sigma_{xx}(s)X^{v,1}_s\times X^{v,1}_s+\tilde{\mathbb{E}}\left[ \sigma_{x \mu}(s)X^{v,1}_s\times \tilde{X}^{v,1}_s\right]\nonumber\\
 &&+\frac{1}{2}\tilde{\mathbb{E}}\left[ \sigma_{y \mu}(s)\tilde{X}^{v,1}_s\times \tilde{X}^{v,1}_s\right]+\frac{1}{2}\bar{\mathbb{E}} \tilde{\mathbb{E}}\left[ \sigma_{\mu \mu}(s)\tilde{X}^{v,1}_s\times \bar{X}^{v,1}_s\right] \bigg\}dB_s.
 \end{eqnarray}

Recalling that $p_t$ is given by (\ref{eq210d}) and applying It\^o's formula to $p_t X^{v,12}_t$, we obtain
\begin{eqnarray}
&&\mathbb{E}\int^T_0 \left \{ h_x(s)X^{v,12}_s + \tilde{\mathbb{E}}\left[h_{\mu}(s) \tilde{X}^{v,12}_s\right] \right\}ds \nonumber\\
&=&\mathbb{E }\int^T_0  p(s)\bigg\{ \bigtriangleup b(s;v)+\bigtriangleup b_x(s;v)X^{v,1}_s+\tilde{\mathbb{E}}\left[\bigtriangleup b_{\mu}(s;v)\tilde{X}^{v,1}_s\right]\nonumber\\
&&+\frac{1}{2} b_{xx}(s)X^{v,1}_s\times X^{v,1}_s+ \tilde{\mathbb{E}}\left[ b_{x \mu}(s)X^{v,1}_s\times \tilde{X}^{v,1}_s\right]\nonumber\\
 &&+\frac{1}{2} \tilde{\mathbb{E}}\left[ b_{y \mu}(s)\tilde{X}^{v,1}_s\times \tilde{X}^{v,1}_s\right]+\frac{1}{2}\bar{\mathbb{E}} \tilde{\mathbb{E}}\left[ b_{\mu \mu}(s)\tilde{X}^{v,1}_s\times \bar{X}^{v,1}_s\right]\bigg \}ds\nonumber\\
 &&+\mathbb{E}\int^T_0  q(s)\bigg\{ \frac{1}{2} \sigma_{xx}(s)X^{v,1}_s\times X^{v,1}_s+\tilde{\mathbb{E}}\left[ \sigma_{x \mu}(s)X^{v,1}_s\times \tilde{X}^{v,1}_s\right]\nonumber\\
 &&+\frac{1}{2}\tilde{\mathbb{E}}\left[ \sigma_{y \mu}(s)\tilde{X}^{v,1}_s\times \tilde{X}^{v,1}_s\right]+\frac{1}{2}\bar{\mathbb{E}} \tilde{\mathbb{E}}\left[ \sigma_{\mu \mu}(s)\tilde{X}^{v,1}_s\times \bar{X}^{v,1}_s\right]\bigg\}ds.
  \end{eqnarray}
Hence,
\begin{eqnarray}\label{eq51}
&&J(v_{\cdot})-J(u_{\cdot})\nonumber\\
&=&\mathbb{E}\int^T_0 \bigtriangleup H(s;v)ds+\mathbb{E}\int^T_0 \bigtriangleup H_x(s;v)X^{v,1}_sds\nonumber\\
&&+\mathbb{E}\tilde{\mathbb{E}}\int^T_0  \bigtriangleup H_\mu(s;v)\tilde{X}^{v,1}_sds\nonumber\\
&&+\frac{1}{2} \mathbb{E} \int^T_0 Trace\left \{ H_{xx}(s)X^{v,1}_s X^{*v,1}_s\right \} ds\nonumber\\
&&+ \mathbb{E}\tilde{\mathbb{E}} \int^T_0 Trace\left\{ H_{x\mu}(s)X^{v,1}_s  \tilde{X}^{*v,1}_s\right\} ds\nonumber\\
&&+\frac{1}{2} \mathbb{E}\tilde{\mathbb{E}} \int^T_0 Trace\left\{  H_{y\mu}(s)\tilde{X}^{v,1}_s \tilde{X}^{*v,1}_s \right\}ds\nonumber\\
&&+\frac{1}{2} \mathbb{E}\tilde{\mathbb{E}}\bar{\mathbb{E}} \int^T_0 Trace\left \{  H_{\mu\mu}(s)\tilde{X}^{v,1}_s {\bar{X}^{*v,1}_s}\right\} ds
+J^{*}\left(v(\cdot)\right).
 \end{eqnarray}

 \indent Now we apply the range theorem for vector-valued measures due to \cite{MY2007},  to deduce the variational inequality.\\ 
 \indent Recall that ${\hat{v}}_t$ is defined by (\ref{eq49}). According to Lemma 4.1 \cite{MY2007}, for any $0<\rho<1$, we can choose a suitable $I_{\rho}\subset [0,T]$ such that
$|I_{\rho}|=\rho T ,$
\begin{eqnarray}\label{eq58}
\rho\int^T_0\bigtriangleup b(s;v)ds=\int_{I_{\rho}}\bigtriangleup b(s;v)ds+\eta^*, \ 
\mathbb{E}|\eta^*|^2=o(\rho^4),
\end{eqnarray}

\begin{eqnarray}\label{eq513}
\rho\int^T_0\mathbb{E}\big [\bigtriangleup H(s;v)\big ]ds=\int_{ I_{\rho}}\mathbb{E}\big[\bigtriangleup H(s;v)\big]ds+o(\rho^2),
\end{eqnarray}
and
\begin{eqnarray}\label{eq53}
&&\rho\int^T_0\mathbb{E}\bigg \{ \bigtriangleup H_x(s;v)X^{v,1}_s+\tilde{\mathbb{E}} \left[ \bigtriangleup H_\mu(s;v)\tilde{X}^{v,1}_s\right]+\bigtriangleup {b}^*(s;v) P_s{X}^{v,1}_s \bigg\}ds\nonumber\\
&=&\int_{ I_{\rho}}\mathbb{E}\bigg \{ \bigtriangleup H_x(s;v)X^{v,1}_s+\tilde{\mathbb{E}} \left[ \bigtriangleup H_\mu(s;v)\tilde{X}^{v,1}_s\right]+\bigtriangleup {b}^*(s;v) P_s{X}^{v,1}_s \bigg\}ds\nonumber\\
&&+o(\rho^2).
\end{eqnarray}

\begin{lemma}\label{lm52}
For the $I_{\rho}$ above, $t\in[0,T]$, we also have
\begin{eqnarray}
&&\rho\int^t_0\bigtriangleup b(s;v)ds=\int_{I_{\rho}\cap[0,t]}\bigtriangleup b(s;v)ds+\eta_t^*, 
\ \sup_{0\leq t\leq T}\mathbb{E}|\eta^*_t|^2=o(\rho^4).
\end{eqnarray}
\end{lemma}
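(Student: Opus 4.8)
The plan is to bootstrap Lemma~\ref{lm52} from the already-established relation \eqref{eq58}, which records only the endpoint value at $t=T$, by controlling the ``running'' error process $\eta_t^*$ uniformly in $t$. First I would recall how \eqref{eq58} is produced: by Lemma~4.1 of \cite{MY2007} applied to the $\mathbb{R}^n$-valued measure $A\mapsto \int_A \mathbb{E}[\triangle b(s;v)\,|\,\mathcal{F}_\cdot]\,ds$ on $[0,T]$, one selects a Borel set $I_\rho$ with $|I_\rho|=\rho T$ such that the ``vector-valued averaging'' identity holds for the global integral over $[0,T]$, together with the several scalar identities \eqref{eq513}--\eqref{eq53} that were bundled into the same application of the range theorem. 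The point of Lemma~\ref{lm52} is that the \emph{same} set $I_\rho$ also works for every truncated integral $\int_0^t$, with an $o(\rho^4)$ bound that is uniform in $t\in[0,T]$.

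The key steps, in order, are as follows. (i) Define $\eta_t^*:=\rho\int_0^t \triangle b(s;v)\,ds-\int_{I_\rho\cap[0,t]}\triangle b(s;v)\,ds$; note $\eta_0^*=0$ and $\eta_T^*=\eta^*$. (ii) Observe that $\eta_t^*$ has uniformly bounded total variation in $t$: writing $\psi(s):=\rho\,\triangle b(s;v)-\mathbf{1}_{I_\rho}(s)\,\triangle b(s;v)$, we have $\eta_t^*=\int_0^t\psi(s)\,ds$, and since $|\psi(s)|\le (1+\rho)|\triangle b(s;v)|\le C(1+|X^u_s|+|v_s|)\mathbf{1}_{I(u,v)}(s)$ by Hypothesis~\ref{H1}(1) and the definition of $\triangle b$, the map $t\mapsto\eta_t^*$ is absolutely continuous with derivative supported on $I(u,v)\cap I_\rho$ (we should be careful: $\triangle b$ need only be supported on $I(u,v)$, not on $I_\rho$, so the true support of $\psi$ is $I(u,v)$, and the $\rho$-term there is genuinely of order $\rho$; this is exactly where the $\rho^4$ gain must come from). (iii) The honest route is to \emph{re-run} the construction rather than merely post-process: when invoking Lemma~4.1 of \cite{MY2007}, instead of applying it to the single vector measure above, apply it simultaneously to the enlarged (still finite-dimensional after the standard approximation/separability reduction) family of measures obtained by also tracking the conditional-expectation processes $\mathbb{E}[\triangle b(s;v)\mid\mathcal{F}_s]$ against a countable dense set of test functionals and all rational times $t$; Lyapunov's convexity theorem / the range theorem then yields one $I_\rho$ for which the averaging identity holds with $o(\rho^2)$ (in $L^2$, $o(\rho^4)$ after squaring) error \emph{for all these functionals at once}, and a density/continuity argument upgrades ``all rational $t$'' to ``all $t\in[0,T]$'' with a supremum bound. (iv) Finally, combine with the BDG-type estimate already used throughout Section~4: since $X^{v,1}$ solves the linear SDE \eqref{eq41} driven by $\triangle b(\cdot;v)$, a Gronwall argument (as in Lemma~\ref{lm42}) propagates the $\sup_t$ control of $\eta_t^*$ to whatever downstream quantity is needed, but for the statement of Lemma~\ref{lm52} itself only the uniform $o(\rho^4)$ bound on $\mathbb{E}|\eta_t^*|^2$ is asserted, so step (iii) suffices.

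The main obstacle is step (iii): the range theorem as quoted gives control of finitely many (or, via separability, countably many) linear functionals of the measure, but ``$\sup_{0\le t\le T}$'' is an uncountable family of functionals $\phi_t(\cdot)=\int_0^t(\cdot)\,ds$. The resolution is that these functionals are \emph{equicontinuous} in $t$: $|\phi_t-\phi_{t'}|$ acting on the total-variation-bounded vector measure is $O(|t-t'|)$ uniformly, and the error $\eta_t^*$ is itself Lipschitz in $t$ with a constant that is $L^2$-controlled by $\mathbb{E}\int_{I(u,v)}|\triangle b(s;v)|^2\,ds<\infty$; hence verifying the estimate on a dense countable set of times and interpolating yields the uniform bound. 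I would present this as: first prove the bound for each fixed $t$ (essentially \eqref{eq58} with $T$ replaced by $t$, which the same $I_\rho$ delivers because Lemma~4.1 of \cite{MY2007} can be applied to the finite system indexed by the finitely many times appearing after a dyadic approximation of $[0,T]$), then pass to the supremum using the modulus-of-continuity estimate for $t\mapsto\eta_t^*$, absorbing the interpolation error into the $o(\rho^4)$. The remaining computations are routine applications of Hypothesis~\ref{H1} bounds, Minkowski's inequality, and the observation $|I(u,v)\cap I_\rho|\le|I_\rho|=\rho T$ exactly as in the proof of Lemma~\ref{lm42}.
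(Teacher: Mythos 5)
Your plan coincides with the paper's own proof: the paper approximates $\bigtriangleup b(\cdot;v)$ by a step process on a partition of mesh $\delta=\rho^{5}$ with $\sup_t\mathbb{E}|\phi_1(t)-\phi_1^{\rho}(t)|^2<\delta$, takes $I_\rho=G$ to be the union of the initial $\rho$-fractions of the partition cells (so the truncated integrals of the step process cancel exactly over every complete cell), and bounds the single partial cell containing $t$ plus the approximation error by $T\delta+K\delta^{2}=o(\rho^{4})$ --- which is exactly your ``finite grid of times plus Lipschitz interpolation'' strategy made explicit. The only point to pin down when writing it up is the quantitative choice of mesh (it must be $o(\rho^{2})$; the paper uses $\rho^{5}$) so that the interpolation/partial-cell error is genuinely absorbed into $o(\rho^{4})$, and to note that the exact Lyapunov-type identity is only invoked for the finitely many cell endpoints, not for all rational times at once.
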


\indent The proof of the above lemma is essentially the same as  Lemma 4.1  \cite{MY2007}. For the convenience of readers, we present the proof here.

\begin{proof}
Let $\phi_i(\cdot)\in L^{2}\left(\Omega; L^{2}(0,T;\mathbb{R}^{l_i})\right),$ $l_i\geq 1$, $i=1,\cdots,k.$ Suppose 
$$\sup_{0\leq t \leq T}\mathbb{E}\left|\phi_1(t)\right|^{2}< \infty$$.
 Given $0<\rho<1$ and set $\delta=\rho^{5}$, then there exists a $n>0$, we can find a process $\phi^\rho_i(\cdot)$ in the form of
$$\phi^{\rho}_i(t)=\sum_{j=0}^n\xi_i^jI_{[t_j,t_{j+1})}(t), \ \ \ 1\leq i\leq k,$$
with $0=t_0<t_1<\cdots<t_{n+1}=T,$ $\max|t_{i+1}-t_i|< \delta$, $\xi^j_i$ being $\mathcal{F}_{t_j}$-measurable, s.t.
\begin{eqnarray}
&&\sup_{0\leq t \leq T}\mathbb{E}\left|\phi_1(t)-\phi^{\rho}_1(t)\right|^{2}< \delta.\\
&&\sum^k_{i=2}\mathbb{E}\left(\int^T_0\left|\phi_i(t)-\phi^{\rho}_i(t)\right|^{2}dt\right)< \delta.
\end{eqnarray}
Note that we can always choose the partition $\{t_j\}_{0 \leq j \leq n+1}$ independent of $i=1,\cdots, k.$ Now letting 
$$G=\bigcup^n_{j=0}\left[t_j,t_j+\rho(t_{j+1}-t_j)\right).$$
It's easy to see $|G|=\rho T$. Thus (\ref{eq58}), (\ref{eq513}), and  (\ref{eq53}) are proved by taking $\phi_i$ suitably. For any $s\in[0,T]$, we can always find a $m\geq 0$, s.t. $s\in [t_m,t_{m+1}).$ Then we have 
\begin{eqnarray}
&&\sup_{0\leq s \leq T}\mathbb{E}\left|\int^T_0I_{\{t\leq s\}}\left(1-\frac{I_G(t)}{\rho}\right)\phi^{\rho}_1(t) dt\right|^2\nonumber\\
&\leq&\sup_{0\leq s \leq T}\mathbb{E}\left|\sum^{m-1}_{j=0}\xi^j_1\left[(t_{j+1}-t_j)-\frac{\rho(t_{j+1}-t_j)}{\rho}\right]+\xi^m_1\left[(s-t_m)-\frac{(s-t_m)}{\rho}\right]\right|^2\nonumber\\
&&+\sup_{0\leq s \leq T}\mathbb{E}\bigg|\sum^{m-1}_{j=0}\xi^j_1\left[(t_{j+1}-t_j)-\frac{\rho(t_{j+1}-t_j)}{\rho}\right]\nonumber\\
&&+\xi^m_1\left[(s-t_m)-\frac{\rho(t_{m+1}-t_m)}{\rho}\right]\bigg|^2\nonumber\\
&\leq& K\delta^2 \rho^{-2}+ K\delta^2,
\end{eqnarray}
where $I_G(t)$ is the indicator function of $G$. Thus, for each $1\leq i\leq k,$
\begin{eqnarray}
&&\sup_{0\leq s\leq T}\mathbb{E}\left|\int^T_0 I_{\{t\leq s\}}\left(1-\frac{I_G(t)}{\rho}\right)\phi_1(t) dt\right|^{2}\nonumber\\
&\leq&\sup_{0\leq s\leq T}\mathbb{E}\left|\int^T_0I_{\{t\leq s\}}\left(1-\frac{I_G(t)}{\rho}\right)\phi_1(t)-I_{\{t\leq s\}}\left(1-\frac{I_G(t)}{\rho}\right)\phi^{\rho}_1(t) dt\right|^{2}\nonumber\\\
&&+K\delta^{2}\rho^{-2}\nonumber\\
&\leq &\mathbb{E}\left|\int^T_0\left|\left(1-\frac{I_G(t)}{\rho}\right)\left[\phi_1(t)-\phi^{\rho}_1(t)\right]\right| dt\right|^{2}+K\delta^{2}\rho^{-2}\nonumber\\
&\leq& \rho^{-2}T\delta+K\delta^{2}\rho^{-2}.
\end{eqnarray}
Set $\phi_1(t)=\bigtriangleup b(t,v)$, then letting $\rho\rightarrow 0$, we finish the proof.
\end{proof}

\begin{lemma}\label{lm53}
For any $\rho\in[0,1]$, there exists a subset $I_\rho$ of $[0,T]$, such that
\begin{eqnarray}\label{eq54}
&&\lim_{\rho\rightarrow 0+}\sup_{0\leq t \leq T}\mathbb{E}\bigg[\bigg\vert\frac{X^{{\hat{v}},1}_t-\rho X^{v,1}_t}{\rho}\bigg\vert^2\bigg]=0.
\end{eqnarray}
where $\hat{v}$ is defined by (\ref{eq49}) with $I_\rho$ given here.
\end{lemma}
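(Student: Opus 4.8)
The plan is to realize the normalized difference $W_t:=\rho^{-1}\big(X^{\hat v,1}_t-\rho X^{v,1}_t\big)$ as the solution of an explicit linear mean-field SDE and to read the rate off from its forcing term. The first step is to observe that, since $\hat v_s=v$ on $I_\rho$ and $\hat v_s=u_s$ on $[0,T]\setminus I_\rho$, the inhomogeneous term in equation (\ref{eq41}) written for $X^{\hat v,1}$ obeys $\bigtriangleup b(s;\hat v)=\text{1}_{I_\rho}(s)\bigtriangleup b(s;v)$, whereas the linear coefficients $b_x,b_\mu,\sigma_x,\sigma_\mu$ appearing in (\ref{eq41}) are evaluated along the optimal pair $(X^u,u)$ and therefore do not change when $v$ is replaced by $\hat v$; moreover $\sigma$ carries no control, so there is no forcing in the diffusion part. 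Subtracting $\rho$ times (\ref{eq41}) for $v$ from (\ref{eq41}) for $\hat v$ and dividing by $\rho$, one gets
\[
W_t=\int_0^t\!\Big\{b_x(s)W_s+\tilde{\mathbb{E}}\big[b_\mu(s)\tilde W_s\big]\Big\}ds
+\int_0^t\!\Big\{\sigma_x(s)W_s+\tilde{\mathbb{E}}\big[\sigma_\mu(s)\tilde W_s\big]\Big\}dB_s+\Psi^\rho_t,
\]
with $\displaystyle\Psi^\rho_t:=\int_0^t\Big(\frac{\text{1}_{I_\rho}(s)}{\rho}-1\Big)\bigtriangleup b(s;v)\,ds$. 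Here $I_\rho$ is taken to be exactly the set constructed in Lemma \ref{lm52} (equivalently, the one chosen before (\ref{eq58})).

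The crux of the proof is the identity
\[
\Psi^\rho_t=\frac1\rho\Big(\int_{I_\rho\cap[0,t]}\bigtriangleup b(s;v)\,ds-\rho\int_0^t\bigtriangleup b(s;v)\,ds\Big)=-\frac1\rho\,\eta^*_t ,
\]
where $\eta^*_t$ is precisely the residual furnished by Lemma \ref{lm52}; consequently $\sup_{0\le t\le T}\mathbb{E}|\Psi^\rho_t|^2=\rho^{-2}\sup_{0\le t\le T}\mathbb{E}|\eta^*_t|^2=o(\rho^2)$. With this in hand I would finish by a routine $L^2$ a priori estimate for the linear mean-field SDE above: using the boundedness of $b_x,b_\mu,\sigma_x,\sigma_\mu$ from Hypothesis \ref{H1}, Jensen's inequality for the copy-expectation $\tilde{\mathbb{E}}$ (so that $\mathbb{E}\big|\tilde{\mathbb{E}}[b_\mu(s)\tilde W_s]\big|^2\le K\,\mathbb{E}|W_s|^2$, and likewise for $\sigma_\mu$), the It\^o isometry, and Gronwall's inequality, one arrives at $\sup_{0\le t\le T}\mathbb{E}|W_t|^2\le K e^{KT}\sup_{0\le t\le T}\mathbb{E}|\Psi^\rho_t|^2=o(\rho^2)$, which is more than enough for (\ref{eq54}). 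This step is entirely parallel to the estimates already carried out in Lemmas \ref{lm42} and \ref{lm43}.

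The only genuine obstacle lies in the displayed identity for $\Psi^\rho_t$. A set $I_\rho$ chosen merely to have Lebesgue measure $\rho T$ would only yield $\mathbb{E}\big|\int_0^T\big(\tfrac{\text{1}_{I_\rho}(s)}{\rho}-1\big)\bigtriangleup b(s;v)\,ds\big|^2=O(1)$, so that $W$ would not tend to $0$ at all; it is precisely the vector-measure (Lyapunov-type) construction of $I_\rho$ coming from Lemma 4.1 of \cite{MY2007}, extended to the running time parameter $t\in[0,T]$ in Lemma \ref{lm52}, that forces $\|\Psi^\rho_t\|_{L^2}=o(\rho)$ uniformly in $t$. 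Everything else in the argument is standard.
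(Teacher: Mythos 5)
Your proposal is correct and follows essentially the same route as the paper: subtract $\rho$ times equation (\ref{eq41}) for $v$ from the equation for $\hat v$, recognize that the only forcing term is $\int_0^t(\bigtriangleup b(s;\hat v)-\rho\bigtriangleup b(s;v))ds=-\eta^*_t$ controlled by Lemma \ref{lm52}, and close with an $L^2$ estimate and Gronwall. The only (cosmetic) difference is that you normalize by $\rho$ at the outset, whereas the paper divides by $\rho^2$ only at the final step.
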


\begin{proof}
By (\ref{eq41}) and lemma \ref{lm52}, we have
\begin{eqnarray}
&&X^{\hat{v},1}_t-\rho X^{v,1}_t\nonumber\\
&=&\int^t_0 b_x(s)\left(X^{\hat{v},1}_t-\rho X^{v,1}_t\right)ds+\int^t_0\tilde{\mathbb{E}}\left[b_\mu(s)\left(\tilde{X}^{\hat{v},1}_t-\rho\tilde{X}^{v,1}_t\right)\right]ds\nonumber\\
&&+\int^t_0\left(\bigtriangleup b(s;\hat{v})-\rho\bigtriangleup b(s;{v})\right)ds\nonumber\\
&&+\int^t_0 \sigma_x(s)\left(X^{\hat{v},1}_t-\rho X^{v,1}_t\right)ds+\int^t_0\tilde{\mathbb{E}}\left[\sigma_\mu(s)\left(\tilde{X}^{\hat{v},1}_t-\rho\tilde{X}^{v,1}_t\right)\right]dB_s.
\end{eqnarray}
Thus, we can get
\begin{eqnarray}
&&\mathbb{E} \left| X^{\hat{v},1}_t-\rho X^{\hat{v},1}_t \right|^2 \nonumber\\
&\leq&K\int^t_0 \mathbb{E} \left| X^{\hat{v},1}_t-\rho X^{\hat{v},1}_t \right|^2ds+\sup_{0\leq t\leq T}\mathbb{E}\left[\int^t_0\left(\bigtriangleup b(s;\hat{v})-\rho\bigtriangleup b(s;{v})\right)ds\right]^2.\nonumber\\
\end{eqnarray}
Notice that 
\begin{eqnarray}
&&K\sup_{0\leq t\leq T}\mathbb{E}\frac{1}{\rho^2}\left[\int^t_0{\left(\bigtriangleup b(s;\hat{v})-\rho\bigtriangleup b(s;{v})\right)}ds\right]^2\nonumber\\
&=&K \sup_{0\leq t \leq T}\frac{1}{\rho^2}\mathbb{E}|\eta^*_t|^2=o(1).
\end{eqnarray}
 By Gronwall's inequality, we get (\ref{eq54}). 
\end{proof}

\indent Further more, inspired by \cite{YZ2000}, we also have the following lemma holds.

\begin{lemma}\label{lm54}
For any $\rho\in[0,1]$, there exists a set $I_\rho\in [0,T]$ and a matrix value process $\Phi(t)$, s.t.  $X^{\hat{v},1}_t$ is represented by the following
\begin{eqnarray}
X^{\hat{v},1}_t=\Phi(t)\int^t_0\Phi^{-1}(s)\bigtriangleup b(s;\hat{v})dt+A^*_t,
\end{eqnarray}
where
\begin{eqnarray}
&&\lim_{\rho\rightarrow 0+}\sup_{0\leq t \leq T}\mathbb{E}\bigg[\bigg\vert\frac{A^*_t}{\rho}\bigg\vert^2\bigg]=0.
\end{eqnarray}
\end{lemma}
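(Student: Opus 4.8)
The plan is to follow the variation-of-constants method of \cite{YZ2000}, adapted to the mean-field setting: the principal part of $X^{\hat v,1}$ is generated by the fundamental matrix solution of the linearized homogeneous equation acting on the perturbation term $\bigtriangleup b(\cdot;\hat v)$, and everything else is to be absorbed into a remainder $A^*_t$ that is $o(\rho)$ in $L^2$, uniformly in $t$. First I would introduce the $\mathbb{R}^{n\times n}$-valued process $\Phi$ as the unique solution of the linear matrix SDE
$$d\Phi(t)=b_x(t)\Phi(t)\,dt+\sigma_x(t)\Phi(t)\,dB_t,\qquad \Phi(0)=I_n .$$
Since $b_x$ and $\sigma_x$ are bounded by Hypothesis \ref{H1}, $\Phi(t)$ is invertible for all $t$, $\Phi^{-1}$ solves the companion linear SDE, and $\mathbb{E}\big[\sup_{0\le t\le T}\big(|\Phi(t)|^p+|\Phi^{-1}(t)|^p\big)\big]\le K_p$ for every $p\ge1$, with $K_p$ independent of $\rho$ and of $\hat v$.

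Next I would apply It\^o's formula to $\Phi^{-1}(t)X^{\hat v,1}_t$, using the equation (\ref{eq41}) satisfied by $X^{\hat v,1}$. The quadratic covariation cancels exactly the local feedback terms $b_x(t)X^{\hat v,1}_t$ and $\sigma_x(t)X^{\hat v,1}_t$, leaving the identity
$$X^{\hat v,1}_t=\Phi(t)\int_0^t\Phi^{-1}(s)\bigtriangleup b(s;\hat v)\,ds+A^*_t ,$$
where $A^*_t$ is the sum of a stochastic integral and of drift terms, each of them carrying a factor $\tilde{\mathbb{E}}[b_\mu(s)\tilde X^{\hat v,1}_s]$ or $\tilde{\mathbb{E}}[\sigma_\mu(s)\tilde X^{\hat v,1}_s]$; schematically,
$$A^*_t=\Phi(t)\int_0^t\Phi^{-1}(s)\Big(\tilde{\mathbb{E}}\big[b_\mu(s)\tilde X^{\hat v,1}_s\big]-\sigma_x(s)\tilde{\mathbb{E}}\big[\sigma_\mu(s)\tilde X^{\hat v,1}_s\big]\Big)ds+\Phi(t)\int_0^t\Phi^{-1}(s)\,\tilde{\mathbb{E}}\big[\sigma_\mu(s)\tilde X^{\hat v,1}_s\big]\,dB_s .$$
It then remains to prove $\sup_{0\le t\le T}\mathbb{E}|A^*_t|^2=o(\rho^2)$. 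I would do this by: applying the Burkholder--Davis--Gundy and H\"older inequalities, together with the uniform moment bounds on $\Phi,\Phi^{-1}$, to reduce the estimate to controlling $\sup_{0\le s\le T}\mathbb{E}\big|\tilde{\mathbb{E}}[\varphi(s)\tilde X^{\hat v,1}_s]\big|^2$ for $\varphi\in\{b_\mu,\sigma_\mu\}$; then using Lemma \ref{lm53} to replace $X^{\hat v,1}_s$ by $\rho X^{v,1}_s$ at the cost of $o(\rho^2)$ uniformly in $s$; and finally running the vector-valued-measure selection of $I_\rho$ as in the proof of Lemma \ref{lm52}, but now with the $\Phi^{-1}$-weighted integrands, to conclude that what survives is $o(\rho)$ uniformly in $t$. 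A Gronwall argument, using that $\bigtriangleup b(\cdot;\hat v)$ and the mean-field data are supported on $I_\rho$, then closes the estimate and yields the representation claimed in the statement.

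The step I expect to be the main obstacle is the estimate of $A^*_t$. In contrast with the non--mean-field case treated in \cite{YZ2000}, where the remainder of the variation-of-constants formula is a genuine It\^o integral against $\Phi^{-1}\bigtriangleup\sigma$ and is disposed of by BDG alone, here $A^*_t$ remains implicitly coupled to the unknown process $X^{\hat v,1}$ through the law-derivative feedback $\tilde{\mathbb{E}}[b_\mu(s)\tilde X^{\hat v,1}_s]$ and $\tilde{\mathbb{E}}[\sigma_\mu(s)\tilde X^{\hat v,1}_s]$. One must therefore propagate the $o(\rho)$-smallness through this mean-field coupling, which is precisely why Lemma \ref{lm53} and the uniformity in $t$ of the selection of $I_\rho$ from Lemma \ref{lm52} are both indispensable; arranging that every error term stays $o(\rho)$ uniformly in $t\in[0,T]$ while passing through these two devices is the delicate part of the argument.
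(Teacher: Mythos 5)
Your choice of fundamental matrix differs from the paper's, and the difference is fatal to the estimate of $A^*_t$. The paper defines $\Phi$ through
\begin{equation*}
d\Phi(t)=\big\{b_x(t)+\tilde{\mathbb{E}}[b_\mu(t)]\big\}\Phi(t)\,dt+\big\{\sigma_x(t)+\tilde{\mathbb{E}}[\sigma_\mu(t)]\big\}\Phi(t)\,dB_t,\qquad \Phi(0)=I,
\end{equation*}
i.e.\ the mean-field coefficients are built into the homogeneous dynamics; it then constructs $\Psi=\Phi^{-1}$ explicitly and applies It\^o's formula to $\Psi_tX^{\hat v,1}_t$. You instead take $d\Phi=b_x\Phi\,dt+\sigma_x\Phi\,dB$ and push the feedback terms $\tilde{\mathbb{E}}[b_\mu(s)\tilde X^{\hat v,1}_s]$ and $\tilde{\mathbb{E}}[\sigma_\mu(s)\tilde X^{\hat v,1}_s]$ into $A^*_t$. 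Those terms are of order $O(\rho)$ in $L^2$ --- exactly the same order as the principal part $\Phi(t)\int_0^t\Phi^{-1}(s)\bigtriangleup b(s;\hat v)\,ds$, which is $O(\rho)$ because $\bigtriangleup b(\cdot;\hat v)$ is supported on a set of measure $\rho T$ --- so they cannot be absorbed into a remainder satisfying $\sup_t\mathbb{E}|A^*_t/\rho|^2\to 0$. Your proposed fix does not rescue this: Lemma \ref{lm53} replaces $X^{\hat v,1}_s$ by $\rho X^{v,1}_s+o(\rho)$, leaving $\rho\int_0^t\Phi^{-1}(s)\tilde{\mathbb{E}}[b_\mu(s)\tilde X^{v,1}_s]\,ds$; the integrand here is \emph{not} supported on $I_\rho$ (the first variation $X^{v,1}$ of the unlocalized control $v$ is generically nonzero on all of $[0,T]$), so the vector-measure selection of Lemma \ref{lm52} gives you nothing, and the term remains of exact order $\rho$.

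A one-dimensional check: take $\sigma=0$ and $b(t,x,\mu,v)=\int y\,\mu(dy)+v$, so that $b_x=0$, $b_\mu=1$ and your $\Phi\equiv I$. Then $m_t:=\mathbb{E}[X^{\hat v,1}_t]$ solves $m_t=\int_0^t\big(m_s+\mathbb{E}\bigtriangleup b(s;\hat v)\big)ds$, whence
\begin{equation*}
X^{\hat v,1}_t=\int_0^t\bigtriangleup b(s;\hat v)\,ds+\int_0^t\big(e^{t-s}-1\big)\,\mathbb{E}\bigtriangleup b(s;\hat v)\,ds,
\end{equation*}
and the second term is of exact order $\rho$, not $o(\rho)$; so the representation with your $\Phi$ is false. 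The essential point is that the mean-field feedback must be carried by the fundamental solution, not by the remainder. (The same example shows one must still argue carefully with the paper's choice of $\Phi$, since $\tilde{\mathbb{E}}[b_\mu(s)\tilde X_s]\neq\tilde{\mathbb{E}}[b_\mu(s)]X_s$ in general, but that is a separate issue from the gap in your decomposition.)
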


\begin{proof}
Let $\Phi(t)$ be the unique solution of the following matrix value SDE:
\begin{eqnarray}
\Phi(t)=I+\int^t_0\left\{b_x(s)+\tilde{\mathbb{E}}\left[b_\mu(s)\right]\right\}\Phi(s)ds+\int^t_0\left\{\sigma_x(s)+\tilde{\mathbb{E}}\left[\sigma_\mu(s)\right]\right\}\Phi(s)dB_s,
\end{eqnarray}
and set
\begin{eqnarray}
\Psi(t)&=&I+\int^t_0 \Psi(s)\left\{-\left(b_x(s)+\tilde{\mathbb{E}}\left[b_\mu(s)\right]\right)+\left(\sigma_x(s)+\tilde{\mathbb{E}}\left[\sigma_\mu(s)\right]\right)^2\right\}ds\nonumber\\
&&- \int^t_0 \Psi(s)\left\{\sigma_x(s)+\tilde{\mathbb{E}}\left[\sigma_\mu(s)\right]\right\}dB_s.
\end{eqnarray}
Applying It\^o's formula to $[\Psi_t\Phi_t]$, we can easily get $d[\Psi_t\Phi_t]=0$, which means $\Psi_t\Phi_t\equiv I$, i.e. $\Psi_t=\Phi_t^{-1}.$ Applying It\^o's formula to $d\left[\Psi_tX^{\hat{v},1}_t\right]$, by Lemma \ref{lm53}, we then get our desire result.
\end{proof}

\indent Now we continue to derive the expansion. Applying It\^o's formula to $Y_{t}:=X^{\hat{v},1}_{t}{X}^{*\hat{v},1}_{t}$, we have
\begin{eqnarray}
dY_t&=&\bigg \{ Y_t{b}_x^*(t)+{X}^{\hat{v},1}_t\tilde{\mathbb{E}}\left[\tilde{X}^{*\hat{v},1}_t {b}_{\mu}^*(t)\right]+{X}^{\hat{v},1}_t\bigtriangleup {b}^*(t;{\hat{v}}) \nonumber\\
&&+b_x(t)Y_t+\tilde{\mathbb{E}}\left[b_{\mu}(t)\tilde{X}^{\hat{v},1}_t\right]{X}^{*\hat{v},1}_t
+\bigtriangleup b(t;v){X}^{*\hat{v},1}_t \\
&&+\left(\sigma_x(t)X^{\hat{v},1}_t+\tilde{\mathbb{E}}\left[\sigma_\mu(t)X^{\hat{v},1}_t\right]\right)\left(X^{*\hat{v},1}_t\sigma^*_x(t)+\tilde{\mathbb{E}}\left[X^{*\hat{v},1}_t\sigma^*_\mu(t)\right]\right)\bigg \}dt \nonumber\\
&&+\bigg \{ Y_t{\sigma}_x^*(t)+{X}^{\hat{v},1}_t\tilde{\mathbb{E}}\left[\tilde{X}^{*\hat{v},1}_t {\sigma}_{\mu}^*(t)\right]+\sigma_x(t)Y_t+\tilde{\mathbb{E}}\left[\sigma_{\mu}(t)\tilde{X}^{\hat{v},1}_t\right]{X}^{*\hat{v},1}_t\bigg \}dB_t\nonumber.
\end{eqnarray}

\indent  Applying It\^o's formula to $P_tY_t$, according to (\ref{eq54}), we obtain
\begin{eqnarray}\label{eq52}
&& \mathbb{E} \int^T_0 Trace\big\{H_{xx}(s)X^{\hat{v},1}_s X^{*\hat{v},1}_s\big\} ds
+ 2\mathbb{E}\tilde{\mathbb{E}} \int^T_0 Trace\big\{  H_{x\mu}(s)X^{\hat{v},1}_s  \tilde{X}^{*\hat{v},1}_s\big\} ds\nonumber\\
&&+\mathbb{E}\tilde{\mathbb{E}} \int^T_0 Trace\big\{  H_{y\mu}(s)\tilde{X}^{\hat{v},1}_s \tilde{X}^{*\hat{v},1}_s \big\}ds\nonumber\\
&&+\mathbb{E}\tilde{\mathbb{E}}\bar{\mathbb{E}} \int^T_0 Trace \big\{  H_{\mu\mu}(s)\tilde{X}^{\hat{v},1}_s {\bar{X}^{*\hat{v},1}_s}\big\} ds\nonumber\\
&=&\mathbb{E}\int^T_0 Trace\big\{P_s{X}^{\hat{v},1}_s\bigtriangleup {b}^*(t;{\hat{v}})
+P_s\bigtriangleup b(s;\hat{v}){X}^{*\hat{v},1}_s\big\}ds+o(\rho^2)\nonumber\\
&=&2\mathbb{E}\int^T_0\left[\bigtriangleup {b}^*(s;\hat{v}) P_s {X}^{\hat{v},1}_s\right]ds+o(\rho^2).
\end{eqnarray}

\indent Putting (\ref{eq52}) into (\ref{eq51}), we conclude
\begin{eqnarray}\label{eq56}
J(\hat{v}_{\cdot})-J(u_{\cdot})&=&\mathbb{E}\int^T_0 \bigtriangleup H(s;\hat{v})ds+\mathbb{E}\int^T_0 \bigtriangleup H_x(s;\hat{v})X^{\hat{v},1}_sds\nonumber\\
&&+\mathbb{E}\tilde{\mathbb{E}}\int^T_0  \bigtriangleup H_\mu(s;\hat{v})\tilde{X}^{\hat{v},1}_sds\nonumber\\
&&\mathbb{E}\int^T_0\left[\bigtriangleup {b}^*(s;\hat{v}) P_s{X}^{\hat{v},1}_s\right]ds+o(\rho^2).
\end{eqnarray}

From Lemma 5.1, (\ref{eq54}) and (\ref{eq56}), we obtain
\begin{eqnarray}
J({\hat{v}}_{\cdot})-J(u_{\cdot})
&=&\mathbb{E}\int^T_0 \bigtriangleup H(s;{\hat{v}})ds+\rho\mathbb{E}\int^T_0 \bigtriangleup H_x(s;{\hat{v}})X^{{{v}},1}_sds\nonumber\\
&&+\rho\mathbb{E}\tilde{\mathbb{E}}\int^T_0  \bigtriangleup H_\mu(s;{\hat{v}})\tilde{X}^{{{v}},1}_sds+\rho\mathbb{E}\int^T_0\left[\bigtriangleup {b}^*(s;{\hat{v}}) P_s{X}^{{{v}},1}_s\right]ds\nonumber\\
&&+o(\rho^2)\nonumber\\
&=&\mathbb{E}\int_{I_{\rho}} \bigtriangleup H(s;{{v}})ds+\rho\mathbb{E}\int_{I_{\rho}} \bigtriangleup H_x(s;{{v}})X^{{{v}},1}_sds\nonumber\\
&&+\rho\mathbb{E}\tilde{\mathbb{E}}\int_{I_{\rho}}  \bigtriangleup H_\mu(s;{{v}})\tilde{X}^{{{v}},1}_sds+\rho\mathbb{E}\int_{I_{\rho}}\left[\bigtriangleup {b}^*(s;{{v}}) P_s{X}^{{{v}},1}_s\right]ds\nonumber\\
&&+o(\rho^2).
\end{eqnarray}
 Finally, according to (\ref{eq53}), we have
\begin{eqnarray}\label{eq55}
&&J({\hat{v}}_{\cdot})-J(u_{\cdot})\nonumber\\
&=&\rho\mathbb{E}\int_0^T \bigtriangleup H(s;{{v}})ds+\rho^2\mathbb{E}\int_0^T \bigtriangleup H_x(s;{{v}})X^{{{v}},1}_sds\nonumber\\
&&+\rho^2\mathbb{E}\tilde{\mathbb{E}}\int_0^T  \bigtriangleup H_\mu(s;{{v}})\tilde{X}^{{{v}},1}_sds+\rho^2\mathbb{E}\int_0^T\left[\bigtriangleup {b}^*(s;{{v}}) P_s{X}^{{{v}},1}_s\right]ds\nonumber\\
&&+o(\rho^2).
\end{eqnarray}

\section{The proofs of the stochastic maximum principle.}
\indent Although the first-order SMP has been obtained by \cite{BLM2016}, we give a proof here for completeness. In fact, after the preparation of the previous sections which will also be needed in the proof of the second-order SMP, this proof does not take too much extra effort.\\
\indent \textbf{Proof of first-order SMP:} Since $\big(X^u_{\cdot}, u_{\cdot}\big)$ is an optimal pair of our system, it follows from (\ref{eq55}) that
\begin{eqnarray}
J({\hat{v}}_{\cdot})-J(u_{\cdot})=\rho\mathbb{E}\int_0^T \bigtriangleup H(s;v)ds+o(\rho)\geq 0.
\end{eqnarray}
 for any $\rho\in [0,T]$, $\forall v(\cdot)\in \mathcal{U}$.
 Setting $\rho \rightarrow 0+$, we obtain
 \begin{eqnarray}\label{eq63}
\mathbb{E}\int_0^T \bigtriangleup H(s;v)ds\geq 0, \ \ \forall v(\cdot)\in \mathcal{U}.
\end{eqnarray}
  Then we can deduce that, for any fixed $v\in \mathcal{U}$, there exists a null subset $S^v\subset [0,T]\times\Omega$, such that for each $(t,\omega)\in \left(S^v\right)^c$,
\begin{eqnarray}\label{eq62}
 \bigtriangleup H(s;v)\geq 0.
\end{eqnarray}
Otherwise, suppose that
$$A=\left\{(s,\omega): \bigtriangleup H(s;v^*)<0\right\}$$
has positive measure in $[0,T]\times \Omega$, for a $v^*\in \mathcal{U}.$ Let
$$\hat{v}^*=v^*\text{1}_A+u\text{1}_{A^c}.$$
Then,
 \begin{eqnarray}
\mathbb{E}\int^T_0 \bigtriangleup H(s;\hat{v}^*)ds =\mathbb{E}\int_0^t \bigtriangleup H(s;{v}^*)\text{1}_A ds<0.
\end{eqnarray}
This contradicts from (\ref{eq63}).\\
\indent Select a countable dense subset $\{v_s^{(i)}\}^\infty_{i=1}\subset U,$ set
 $$S_0=\bigcup^\infty_{i=1} S^{v^{(i)}}.$$
 Then, $S_0$ is a null subset of $[0,r]\times\Omega$, and for $(t,\omega)\in S:= \left(S_0\right)^c$, we get
 \begin{eqnarray}\label{eq61}
 \bigtriangleup H(s;v^{(i)})\geq 0.
\end{eqnarray}
By Fubini's theorem, it is easy to see that there exists a null subset $T_0$ of $[0,T]$, such that $\forall t\in T_0^c,$ (\ref{eq61}) holds $a.s.$.\\
\indent Finally, from the continuity of the function and the denseness of $\{v^{(i)}\}^\infty_{i=1}$, we have for $t\in\left(T_0\right)^c$,
 \begin{eqnarray}
 \bigtriangleup H(s;v)\geq 0,\ \forall v\in U, a.s..
\end{eqnarray}
\qed

\indent Now, we proceed to presenting the proof of the second-order stochastic maximum principle for singular generalized mean-field control problem. \\
{\em Proof of Theorem \ref{thm0209a}}.
 The optimality and the singularity imply that
$$\bigtriangleup H(t;v)\equiv 0, \ \forall v\in V.$$
According to (\ref{eq55}), we have 
 \begin{eqnarray}\label{eq64}
 &&\mathbb{E}\int^{t_2}_{t_1}\bigg \{\bigtriangleup H_x(s;v)X^{v,1}_s+\tilde{\mathbb{E}}\left[\bigtriangleup H_\mu(s;v)\tilde{X}^{v,1}_s\right]+\bigtriangleup {b}^*(s;v) P_s{X}^{v,1}_s\bigg\}ds\nonumber\\
 &\geq& 0,\ \ \ \forall v\in \mathcal{V}(t_1,t_2), a.s.,
 \end{eqnarray}
 where
 \begin{eqnarray*}
 \mathcal{V}(t_1,t_2):=&&\Big\{v(\cdot)\in \mathcal{U}|v_t\in V, a.s., a.e., t\in[t_1,t_2]; v(t)=u(t), \\
 &&t\in[0,T]\setminus [t_1,t_2] \Big\}.
\end{eqnarray*}

\indent  As in \cite{H1976}\cite{T2010}, denote by $\{ t_i \}^\infty_{i=1}$ the collection of all rational numbers in $[0,T],$ and $\{v_k\}^\infty_{k=1}$ a dense subset of $V$. Because of the fact that $\mathcal{F}_t$ is countability generated for $t\in[0,T],$ we can assume $\{A_{i,j}\}^\infty_{j=1}$ generates $\mathcal{F}_{t_{i}}, \ i=1,2,\cdots.$ For any $\tau\in [t_i,T)$ and $\theta\in (0,T-\tau),$ write $E^i_{\theta}=[\tau,\tau+\theta),$ and define
\begin{equation}
v^{k}_{i,j}(t,\omega)=\left\{
\begin{array}{ccl}
v_k(t,\omega), \ \ \ \ \ \ \  (t,\omega)\in E^i_\theta \times A_{i,j},\\
\ \  \ u(t,\omega), \ \ \ \ \ \ \  (t,\omega)\in \left(E^i_\theta \times A_{i,j}\right)^c.
  \end{array}
  \right.
\end{equation}
Let $X^{1k}_{ij}$ be the solution to the equation (\ref{eq41}) with respect to $v^k_{i,j}(\cdot)$. Notice that we can always choose suitable $I_\theta$, such that $I_\theta \cap E^i_\theta= E^i_\theta.$ So Lemma \ref{lm54} holds for $X^{1k}_{ij}$.

By Lemma 4.1 \cite{ZZ2015}, lemma \ref{lm42} and Lebesgue differential theorem, there is a null subset $T^k_{ij}\subset [0,T]$ such that for $\tau\in \left(T^k_{ij}\right)^c$, we have
\begin{eqnarray}\label{eq65}
0&\leq&\lim_{\theta\rightarrow 0+}\frac{1}{\theta^2}\int^{\tau+\theta}_{\tau} \mathbb{E}\bigg \{\bigtriangleup H_x(s;v^{k}_{i,j})X^{1k}_{ij}(s)+\tilde{\mathbb{E}}\left[\bigtriangleup H_\mu(s;v^{k}_{i,j})\tilde{X}^{1k}_{ij}(s)\right]\nonumber\\
&&+\bigtriangleup {b}^*(s;v^{k}_{i,j}) P_s X^{1k}_{ij}(s)\bigg\}ds\nonumber\\
&=&\lim_{\theta\rightarrow 0+}\frac{1}{\theta^2}\int^{\tau+\theta}_{\tau} \mathbb{E}\bigg \{\bigtriangleup H_x(s;v^{k}_{i,j})\Phi(s)\int^s_\tau \Phi^{-1}(r) \bigtriangleup b(r;v_{k})\text{1}_{A_{ij}}dr\nonumber\\
&&+\tilde{\mathbb{E}}\left[\bigtriangleup H_\mu(s;v^{k}_{i,j})\Phi(s)\int^s_\tau \Phi^{-1}(r) \bigtriangleup \tilde{b}(r;v_{k})\text{1}_{A_{ij}}dr\right]\nonumber\\
&&+\bigtriangleup {b}^*(r;v^{k}_{i,j}) P_s \Phi(s)\int^s_\tau \Phi^{-1}(r) \bigtriangleup {b}(r;v_{k})\text{1}_{A_{ij}}dr\bigg\}ds\nonumber\\
&=&\mathbb{E}\bigg \{\bigtriangleup H_x(\tau;v_k)\bigtriangleup b(\tau;v_k)\text{1}_{A_{ij}}+\tilde{\mathbb{E}}\left[\bigtriangleup H_\mu(\tau;v_k)\bigtriangleup \tilde{b}(\tau;v_{k})\text{1}_{A_{ij}}\right]\nonumber\\
&&+\bigtriangleup {b}^*(\tau;v_k) P_\tau\bigtriangleup b(\tau;v_k)\text{1}_{A_{ij}}\bigg \}.
\end{eqnarray}

\indent Set
$$T_0=\bigcup_{1\leq i,j,k\leq \infty}T^k_{i,j}.$$
Then, $T_0$ is a null subset of $[0,T]$. For $s\in [0,T]\setminus T_0$ and $i$, we deduce that
\begin{eqnarray}
&&\mathbb{E}\bigg \{\bigtriangleup H_x(s;v_k)\bigtriangleup b(s;v_k)\text{1}_{A_{ij}}+\tilde{\mathbb{E}}\left[\bigtriangleup H_\mu(s;v_k)\bigtriangleup \tilde{b}(s;v_{k})\text{1}_{A_{ij}}\right]\nonumber\\
&&+\bigtriangleup {b}^*(s;v_k) P_s\bigtriangleup b(s;v_k)\text{1}_{A_{ij}}\bigg \}\geq 0,\ \forall j,k=1,2,\cdots,
\end{eqnarray}
which means
\begin{eqnarray}
&&\mathbb{E}\bigg \{\bigtriangleup H_x(s;v)\bigtriangleup b(s;v)\text{1}_{A}+\tilde{\mathbb{E}}\left[\bigtriangleup H_\mu(s;v)\bigtriangleup \tilde{b}(s;v)\text{1}_{A}\right]\nonumber\\
&&+\bigtriangleup {b}^*(s;v) P(s)\bigtriangleup b(s;v)\text{1}_{A}\bigg\}\geq 0,\ \forall v\in V,\  A\in \mathcal{F}_t.
\end{eqnarray}
\indent By virtue of the continuity of the function and the denseness of $\{v_k\}^\infty_{k=1}$, we finish the proof.\qed

\begin{remark}
 We now come back to Example \ref{em11}. It is not hard to check that the second order adjoint process $(P_t,Q_t)\equiv(1,0).$ Then
\begin{eqnarray*}
 &&\bigtriangleup H_x(t;v)\bigtriangleup b(t;v)+\tilde{\mathbb{E}}\left[\bigtriangleup H_\mu(t;v)\bigtriangleup \tilde{b}(t;v)\right]\nonumber\\
&&+\bigtriangleup {b}^*(t;v) P_t\bigtriangleup b(t;v)=v_t^2\geq 0,\ \forall v\in U, a.s..
\end{eqnarray*}
So we can say $u_t\equiv 0$ is the only candidate for optimal controls.
\end{remark}



\medskip
Received xxxx 20xx; revised xxxx 20xx.
\medskip
\end{spacing}
\end{document}